\newcommand{\Spec}{\operatorname{Spec}}
\renewcommand{\phi}{\varphi}
\newcommand{\rn}{\operatorname{rank}}
\newcommand{\Ker}{\operatorname{Ker}}
\newcommand{\Ima}{\operatorname{Im}}
\newcommand{\Max}{\operatorname{Max}}
\newcommand{\Min}{\operatorname{Min}}
\newcommand{\Ann}{\operatorname{Ann}}
\newcommand{\Hom}{\operatorname{Hom}}
\newcommand{\Supp}{\operatorname{Supp}}
\newcommand{\tr}{\operatorname{tr}}
\newcommand{\Cl}{\operatorname{\mathfrak{C}}}
\newcommand{\Pic}{\operatorname{Pic}}
\newtheorem{proposition}{Proposition}[section]
\newtheorem{lemma}[proposition]{Lemma}
\newtheorem{corollary}[proposition]{Corollary}
\newtheorem{theorem}[proposition]{Theorem}
\theoremstyle{definition}
\newtheorem{example}[proposition]{Example}
\newtheorem{remark}[proposition]{Remark}
\patchcmd{\@settitle}{\uppercasenonmath\@title}{}{}{}
\patchcmd{\@setauthors}{\MakeUppercase}{}{}{}
\begin{document}

\title[Grothendieck ring vs Picard group]{On the Grothendieck ring and the relation of its group of units with the Picard group}

\author[A. Tarizadeh]{Abolfazl Tarizadeh}
\address{Department of Mathematics, Faculty of Basic Sciences, University of Maragheh, Maragheh, East Azerbaijan Province, Iran.}
\email{ebulfez1978@gmail.com}

\date{}
\subjclass[2010]{13B02, 13C10, 11R29, 14C22, 13D15, 16E20, 19A49, 14C35}
\keywords{Picard group; Grothendieck group; Grothendieck ring; The additive group of idempotents}

\begin{abstract} As the first main result of this article, we prove that if $e$ and $e'$ are idempotents of a commutative ring $A$, then there is a canonical isomorphism of $A$-modules: $$Ae\oplus Ae'\simeq Ae/Ae(1-e')\oplus Ae'/Ae'(1-e)\oplus A(e+e'-2ee').$$
This result plays an important role in proving several results on the Grothendieck ring $K_{0}(A)$.  
Especially, we first show that for any ring $A$ there is a complex of Abelian groups which is exact at the beginning and end: $$\xymatrix{0\ar[r]&\Pic(A)\ar[r]&K_{0}(A)^{\ast}
\ar[r]&\mathscr{B}(A)\ar[r]&0.}$$ 
Then we show that the above sequence is split exact for some certain rings $A$ (including Dedekind domains or more generally Noetherian one dimensional rings). 
The next main result asserts that for any ring $A$ we have the canonical isomorphisms of Abelian groups $\mathscr{B}(A)\simeq\mathscr{B}\big(K_{0}(A)\big)\simeq H_{0}(A)^{\ast}$. As an application, we show that a morphism of rings $A\rightarrow B$ lifts idempotents if and only if the induced ring map $K_{0}(A)\rightarrow K_{0}(B)$ lifts idempotents. If moreover, $B$ has finitely many maximal ideals then the map $K_{0}(A)\rightarrow K_{0}(B)$ is surjective. 
Finally, we show that the support of a finitely generated projective module is the whole prime spectrum if and only if its trace ideal is the whole unit ideal. 
\end{abstract}

\maketitle

\section{Introduction}

In this article, for any commutative ring $A$, we are interested in studying the Grothendieck ring $K_{0}(A)$ which is constructed in the most standard and canonical way. In particular, we are interested in studying the group of units of $K_{0}(A)$.

To achieve this goal, we first obtain an interesting formula regarding idempotents. In fact, we show that if $e$ and $e'$ are idempotents of a commutative ring $A$, then we have the following  canonical isomorphism of $A$-modules: $$Ae\oplus Ae'\simeq Ae/Ae(1-e')\oplus Ae'/Ae'(1-e)\oplus A(e+e'-2ee').$$ 
It is worth noting that during our scientific correspondence with Pierre Deligne, he at first somewhat doubted the correctness of this formula, but in the end the author managed to prove it. 

Next, in Theorem \ref{Theorem Pic-iv}, we prove that for any commutative ring $A$ we have the following complex of Abelian groups which is exact at the beginning and end places: $$\xymatrix{0\ar[r]&\Pic(A)\ar[r]^{f}&K_{0}(A)^{\ast}
\ar[r]^{g}&\mathscr{B}(A)\ar[r]&0.}$$
In the literature there is no known general result to compute the group of units of the Grothendieck ring $K_{0}(A)$. In this article, we compute this group for a certain class of rings (including Dedekind domains and more generally one dimensional Noetherian rings). In fact, in Corollary \ref{coro lbp 24}, we show that for such a ring $A$, we have the canonical isomorphism of groups $K_{0}(A)^{\ast}\simeq\Pic(A)\oplus\mathscr{B}(A)$. 

Grothendieck discovered algebraic K-theory in the late 1950s during his proof of the Grothendieck–Riemann–Roch theorem. In particular, he proved a fundamental result which asserts that for any commutative ring $A$, then the ring $K_{0}(A)$ modulo its nil-radical is canonically isomorphic to $H_{0}(A)$, the ring of all continuous functions $\Spec(A)\rightarrow\mathbb{Z}$ (for the proof see \cite[Chap. IX, \S3, Proposition 4.6]{Bass} or  \cite[Corollary 10.7]{Swan} or \cite[Chap. II, \S4, Corollary 4.6.1]{Weibel}). 

In this article, we will use the whole strength of Grothendieck's theorem and the above result that we obtained on idempotents, to prove several results on the Grothendieck ring $K_{0}(A)$. In particular, we obtain the following canonical isomorphisms of groups  $\mathscr{B}(A)\simeq\mathscr{B}\big(K_{0}(A)\big)\simeq H_{0}(A)^{\ast}$ (see Lemma \ref{Lemma idempots iso gro} and Theorem \ref{Corollary idemps of gr}). As an application, we show that a morphism of rings $A\rightarrow B$ lifts idempotents if and only if the induced ring map $K_{0}(A)\rightarrow K_{0}(B)$ lifts idempotents. If moreover, $B$ has finitely many maximal ideals, then we show that the canonical map $K_{0}(A)\rightarrow K_{0}(B)$ is surjective (see Theorem \ref{coro 23 surj}). We also show that for any nonzero ring $A$, then $K_{0}(A)$ is always an infinite ring of characteristic zero. 

In Lemma \ref{lemma Pic-iii}, we show that the support of a finitely generated projective module is the whole prime spectrum if and only if its trace ideal is the whole unit ideal of the ring.

\section{Preliminaries}

In this section, we recall some basic background for the convenience of the reader. In this article, all mathematical objects (monoids, groups, semirings and rings) are commutative. The group of units (invertible elements) of a ring $A$ is denoted by $A^{\ast}$. The set of zero-divisors of $A$ is denoted by $Z(A)$. 

If $M$ is a finitely generated flat module over a ring $A$, then for each $\mathfrak{p}\in\Spec(A)$, there exists a (unique) natural number $n_{\mathfrak{p}}\geqslant0$ such that $M_{\mathfrak{p}}\simeq
(A_{\mathfrak{p}})^{n_{\mathfrak{p}}}$ as  $A_{\mathfrak{p}}$-modules, because it is well known that every finitely generated flat module over a local ring is a free module (see \cite[Theorem 7.10]{Matsumura}). In fact, this number $n_{\mathfrak{p}}$ is the dimension of $\kappa(\mathfrak{p})$-vector space $M\otimes_{A}\kappa(\mathfrak{p})$ where $\kappa(\mathfrak{p})=A_{\mathfrak{p}}/
\mathfrak{p}A_{\mathfrak{p}}$ is the residue field of $A$ at $\mathfrak{p}$.
Hence, we obtain a function  $\mathrm{r}_{M}:\Spec(A)\rightarrow\mathbb{Z}$ given by $\mathfrak{p}\mapsto
\rn_{R_{\mathfrak{p}}}(M_{\mathfrak{p}})=n_{\mathfrak{p}}$. This function is called the rank map of $M$. It is well known that the rank map of a finitely generated flat $A$-module is continuous if and only if it is a projective $A$-module.  

If $G=\{[a,b]: a,b\in M\}$ is the Grothendieck group of a commutative monoid $M$, then the canonical map $f:M\rightarrow G$ given by $m\mapsto[m,0]$ is a morphism of monoids and the pair $(G,f)$ satisfies in the following universal property: for any such pair $(H,g)$, i.e., $H$ is an Abelian group and $g:M\rightarrow H$ is a morphism of monoids, then there exists a unique morphism of groups $h:G\rightarrow H$ such that $g=hf$. In fact, $h([a,b])=g(a)-g(b)$. 

Let $S$ be a semiring. The Grothendieck group $G(S)$ of the additive monoid $(S,+)$ can be made into a ring by defining the multiplication on it as $[a,b]\cdot[c,d]=[ac+bd, ad+bc]$. The multiplicative identity of this ring is $[1,0]$. The ring $G(S)$ is called the \emph{Grothendieck ring} of the semiring $S$. The canonical map $f:S\rightarrow G(S)$ given by $s\mapsto[s,0]$ is a morphism of semirings and the pair $\big(G(S),f\big)$ satisfies in the following universal property: For each such pair $(A,g)$, i.e. $g:S\rightarrow A$ is a morphism of semirings into a ring $A$, then there exists a unique morphism of rings $h:G(S)\rightarrow A$ such that $g=hf$. 

Let $A$ be a ring. By $S(A)$ we mean the set of isomorphism classes of finitely generated projective $A$-modules which is a semiring with the following operations. If $M$ and $N$ are finitely generated projective $A$-modules, then the addition is defined as $[M]+[N]=[M\oplus N]$ and the multiplication is defined as $[M]\cdot[N]=[M\otimes_{A}N]$. The isomorphism class of the zero module is the additive identity of this semiring, and the isomorphism class of $A$ is the multiplicative identity of this semiring. In this semiring, we will often denote the isomorphism class $[M]$ simply by $M$ if there is no confusion. We denote the Grothendieck ring of the semiring $S(A)$ by $K_{0}(A)$. The ring $K_{0}(A)$ is of particular interest in mathematics, especially in algebraic K-theory. Sometimes by abuse of the terminology, $K_{0}(A)$ is also called the Grothendieck ring of $A$.
Every morphism of rings $\phi:A\rightarrow B$ induces a morphism of semirings $S(A)\rightarrow S(B)$ that is given by $M\mapsto M\otimes_{A}B$. Then by the universal property of Grothendieck rings, we obtain a (unique) morphism of rings $K_{0}(\phi):K_{0}(A)\rightarrow K_{0}(B)$ which is given by $[M,N]\mapsto[M\otimes_{A}B, N\otimes_{A}B]$. 
It can be easily seen that every element of $K_{0}(A)$ is also of the form $[P,A^{d}]$ where $P$ is a finitely generated projective $A$-module and $d\geqslant0$. Also note that in $K_{0}(A)$ we have $[M,A^{m}]=[N,A^{n}]$ if and only if $M\oplus A^{n+d}\simeq N\oplus A^{m+d}$ as $A$-modules for some $d\geqslant0$. 

For any ring $A$, by $H_{0}(A)$ we mean the ring of all continuous functions $\Spec(A)\rightarrow\mathbb{Z}$ where $\mathbb{Z}$ is equipped with the discrete topology. If $\phi:A\rightarrow B$ is a morphism of rings then the map $H_{0}(\phi):H_{0}(A)\rightarrow H_{0}(B)$ given by $f\mapsto f\phi^{\ast}$ is a morphism of rings where the map $\phi^{\ast}:\Spec(B)\rightarrow\Spec(A)$ is induced by $\phi$. For more information on this ring see e.g. \cite[\S5]{A. Tarizadeh Racsam2} or \cite{Weibel} or \cite{Swan} or \cite{Bass}. The map $S(A)\rightarrow H_{0}(A)$ given by $[M]\mapsto\mathrm{r}_{M}$ is a morphism of semirings where $\mathrm{r}_{M}$ denotes the rank map of $M$. Then by the universal property of Grothendieck rings, we obtain a (unique) morphism of rings $K_{0}(A)\rightarrow H_{0}(A)$ which is given by $[M,N]\mapsto\mathrm{r}_{M}-\mathrm{r}_{N}$.  

\section{Finitely generated projective modules and Picard group}

In this section, we first prove some results on finitely generated projective modules that slightly improve the related results in the literature. Then we derive the Picard group construction from these results. 

Recall that if $M$ is a module over a ring $A$ then we have a canonical morphism of $A$-modules $\widehat{M}\otimes_{A} M\rightarrow A$ that is given by $f\otimes x\mapsto f(x)$ where $\widehat{M}=\Hom_{A}(M,A)$ is the dual module of $M$. The image of this map is an ideal of $A$ which is called the \emph{trace ideal of $M$} and is denoted by $\tr_{A}(M)$ or simply by $\tr(M)$ if there is no confusion on the base ring $A$. 
The following result improves \cite[Chap 3, Prop. 20]{Silvester}.

\begin{lemma}\label{lemma Pic-iii} Let $M$ be a module over a ring $A$. If $\tr(M)=A$ then $\Supp(M)=\Spec(A)$. If moreover, $M$ is a finitely generated projective $A$-module the converse holds. 
\end{lemma}

\begin{proof} Suppose $M_{\mathfrak{p}}=0$ for some prime ideal $\mathfrak{p}$ of $A$. Since $\tr(M)=A$, we may write $1=\sum\limits_{i=1}^{n}f_{i}(x_{i})$ where $f_{i}\in\widehat{M}$ and $x_{i}\in M$ for all $i$. 
For each $i$, there is some $s_{i}\in A \setminus\mathfrak{p}$ such that $s_{i}x_{i}=0$. Take $s=s_{1}\ldots s_{n}$ then $s=\sum\limits_{i=1}^{n}sf_{i}(x_{i})=
\sum\limits_{i=1}^{n}f_{i}(sx_{i})=0$ which is a contradiction. Conversely, if $I=\tr(M)$ is a proper ideal of $A$, then $I\subseteq\mathfrak{p}$ for some $\mathfrak{p}\in\Spec(A)$. It is well known that if $M$ is a projective $A$-module, then $IM=M$. For its proof see e.g. \cite[Theorem 3.1]{A. Tarizadeh acta}. It follows that $(IR_{\mathfrak{p}})M_{\mathfrak{p}}=M_{\mathfrak{p}}$. Thus by the Nakayama lemma, $M_{\mathfrak{p}}=0$ which is a contradiction. 
\end{proof}

There is a minor gap in the last lines of the proof of \cite[Chap. III, Proposition 7.4]{Bass}. In the following result, we fill it in:

\begin{lemma}\label{Lemma Pic-ii H.Bass} Let $M$ and $N$ be modules over a ring $A$. If $M\otimes_{A}N\simeq A^{n}$ as $A$-modules for some $n\geqslant1$, then $M$ and $N$ are finitely generated projective $A$-modules.
\end{lemma}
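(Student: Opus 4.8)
The plan is to show that $M$ (and then, symmetrically, $N$) is a direct summand of a finite free module. First I would exploit the hypothesis: fix an isomorphism $\phi\colon M\otimes_R N \xrightarrow{\sim} R^n$ and write the image of $1$-relevant data explicitly. Since $R^n$ is generated by its standard basis $e_1,\dots,e_n$, each $e_k$ pulls back under $\phi$ to a finite sum $\sum_{j} m_{kj}\otimes n_{kj}$ with $m_{kj}\in M$, $n_{kj}\in N$ (finitely many elements because an element of a tensor product is always a finite sum of simple tensors). Let $M_0 \subseteq M$ be the finitely generated submodule generated by all the $m_{kj}$, and likewise $N_0\subseteq N$ generated by all the $n_{kj}$. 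The point of this reduction is that the composite $M_0\otimes_R N_0 \to M\otimes_R N \xrightarrow{\phi} R^n$ is surjective, since its image contains every $e_k$.

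Next I would use that surjection to split things off. Because $R^n$ is free, hence projective, the surjection $M_0\otimes_R N_0 \twoheadrightarrow R^n$ splits: there is an $R$-linear section $s\colon R^n \to M_0\otimes_R N_0$. Now I would play the two tensor factors against each other. Tensoring the original isomorphism appropriately, one gets that $N$ is a direct summand of $N\otimes_R M\otimes_R N \cong R^n \otimes_R N \cong N^n$; more usefully, I would argue that $M$ is a retract of $M\otimes_R(M\otimes_R N)\cong M\otimes_R R^n \cong M^n$, but this only shows $M$ is a summand of $M^n$, which is not yet finiteness. The right move is the standard ``dual basis'' trick applied to the section $s$: writing $s(e_k) = \sum_\ell a_{k\ell}\otimes b_{k\ell}$ inside $M_0\otimes_R N_0$ and feeding this through the evaluation-type maps, one produces finitely many elements of $M$ and finitely many $R$-linear functionals $M\to R$ (coming from pairing with the chosen elements of $N$ via $\phi$) whose combination is the identity on $M$; equivalently $M$ is a direct summand of a finitely generated free module, hence finitely generated projective. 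Then $N$ follows by the symmetric argument, exchanging the roles of $M$ and $N$.

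More precisely, here is the mechanism I would make rigorous. From $\phi\colon M\otimes_R N\cong R^n$ one obtains, for each $n^*\in N$ (or rather each coordinate-type construction built from $N$), an $R$-linear map $M\to R$; concretely, $\phi$ together with the $n_{kj}$ yields functionals, and $\phi^{-1}$ together with the $m_{kj}$ yields elements, arranged so that $\sum$ (element)$\otimes$(functional) recovers $\mathrm{id}_M$ after identifying $M\cong M\otimes_R R \cong M\otimes_R N\otimes_R(\text{something})$ — the bookkeeping is exactly the classical argument that an invertible module is finitely generated projective, only without assuming $n=1$. I would carry this out by first establishing $M_0 = M$ and $N_0 = N$: from surjectivity of $M_0\otimes N_0\to R^n$ and the section $s$, tensor with $N$ (resp. $M$) to see the inclusions $M_0\hookrightarrow M$, $N_0\hookrightarrow N$ become split surjections after applying $-\otimes_R N$ (resp. $M\otimes_R -$), then iterate/bootstrap using that $M\otimes_R N$ is faithfully detecting — or more simply, observe directly that the composite $M \to M\otimes_R N\otimes_R N' \to \cdots$ factors through $M_0$.

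The main obstacle, and the place where the cited proof in Bass has its gap, is precisely the passage from ``$M$ is a retract of $M^n$'' or ``$M_0\otimes N_0 \twoheadrightarrow R^n$ splits'' to ``$M$ itself is finitely generated.'' Merely splitting off a free summand from $M_0\otimes N_0$ does not immediately bound $M$; one has to feed the section $s$ back through $\phi^{-1}$ and contract one tensor slot to manufacture an honest finite generating set of $M$ together with the dual functionals witnessing projectivity. So the crux of the write-up will be setting up the contraction map $M\otimes_R N\otimes_R M \to M$ (via $\phi$ applied to the first two factors, times the third, landing in $R^n\otimes_R M \cong M^n \to M$ by a chosen coordinate, summed correctly) and checking that composing it with $\mathrm{id}_M\otimes s$ gives $\mathrm{id}_M$ with image inside $M_0$. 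Once that identity is in hand, $M = M_0$ is finitely generated and the dual-basis data exhibits it as projective; applying the symmetric construction to $N$ completes the proof.
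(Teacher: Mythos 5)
Your opening moves coincide with the paper's: finitely many pure tensors generate $M\otimes_R N\simeq R^{n}$, the induced map from a finitely generated module onto $R^{n}$ is surjective, and it splits because $R^{n}$ is projective. The gap is in the step you yourself call the crux. The identity you propose to verify --- that the $\phi$-contraction, composed with $\mathrm{id}_M\otimes s$ and suitable coordinate inclusions/projections between $M$ and $M^{n}$, equals $\mathrm{id}_M$ --- is never established, and for the canonical choices it is simply false: the canonical contraction computes a partial trace, not the identity. Writing $s(e_k)=\sum_{\ell}a_{k\ell}\otimes b_{k\ell}$, the natural ``combination'' your sketch produces is $x\mapsto\sum_{k,\ell}\pi_k\bigl(\phi(x\otimes b_{k\ell})\bigr)a_{k\ell}$; taking $M=N=R^{2}$, $n=4$, $\phi$ the standard isomorphism and $s=\phi^{-1}$, this map sends $x$ to $2x$, while using a single coordinate instead of the sum returns $x_{1}\epsilon_{1}$ --- in neither case $x$. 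Ad hoc choices of vectors and functionals on $R^{n}$ that do make such a composite equal $\mathrm{id}_M$ exist in examples, but producing them in general is essentially equivalent to the projectivity being proved, so the argument is circular at its decisive point. (The subclaim $M_{0}=M$ is true and can be rescued cleanly: with $Q=M/M_{0}$, right-exactness gives $Q\otimes_R N=0$ because $M_{0}\otimes_R N\to M\otimes_R N$ is onto, whence $Q^{n}\simeq Q\otimes_R(N\otimes_R M)=0$ and $Q=0$ since $n\geqslant1$; your ``bootstrap/faithfully detecting'' remark does not yet amount to this.)

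The paper closes the argument without any dual-basis bookkeeping, using exactly the datum you dismissed as ``not yet finiteness.'' Choosing pure tensors $x_{k}\otimes y_{k}$, $k=1,\dots,d$, generating $M\otimes_R N$ and $h\colon R^{d}\to M$ with $h(\epsilon_{k})=x_{k}$, the surjection $h\otimes 1_{N}\colon N^{d}\to M\otimes_R N\simeq R^{n}$ splits; applying the additive functor $M\otimes_R-$ preserves split exactness, so $M^{n}\simeq M\otimes_R R^{n}$ is a direct summand of $M\otimes_R N^{d}\simeq(M\otimes_R N)^{d}\simeq R^{nd}$. Since $n\geqslant1$, $M$ is a direct summand of $M^{n}$, hence of the finite free module $R^{nd}$, so it is finitely generated projective; symmetrically for $N$. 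In other words, ``$M$ is a summand of $M^{n}$'' suffices once the split surjection exhibits $M^{n}$ as a summand of a finite free module --- no contraction identity is needed. If you want to keep your dual-basis formulation, you must extract the elements and functionals from this splitting of $R^{nd}\twoheadrightarrow M^{n}$ rather than from the unproved identity above.
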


\begin{proof} Clearly $M\otimes_{A}N=(x_{k}\otimes y_{k}: k=1,\ldots, d)$ is a finitely generated $A$-module where $d\geqslant n$. By the universal property of free modules, there is a (unique) morphism of $A$-modules $h:A^{d}\rightarrow M$ such that $h(\epsilon_{k})=x_{k}$ for all $k$. Since each $x_{k}\otimes y_{k}$ is in the image of the induced morphism $h\otimes1_{N}:A^{d}\otimes_{A}N\rightarrow M\otimes_{A}N$, thus it is surjective. In fact, $h\otimes1_{N}$ is a split epimorphism. That is, we have the following split exact sequence:  $$\xymatrix{0\ar[r]&K\ar[r]^{inc\:\:\:\:\:\:\:\:\:\:\:}&
A^{d}\otimes_{A}N
\ar[r]^{h\otimes1_{N}}&M\otimes_{A}N\ar[r]&0}$$ where $K$ is the kernel of $h\otimes1_{N}$. Note that split exact sequences are left split exact by additive functors. Hence, by applying the additive functor $M\otimes_{A}-$ to the above sequence, we obtain the following split exact sequence: $$\xymatrix{0\ar[r]&M\otimes_{R}K\ar[r]
&A^{nd}\ar[r]&M^{n}\ar[r]&0.}$$ Since $n\geqslant1$, so $M$ is a direct summand of the free $A$-module $A^{nd}$. Hence, $M$ is a finitely generated projective $A$-module. Similarly, $N$ is also a finitely generated projective $A$-module.
\end{proof}

Recall that if $M$ and $N$ are finitely
generated flat (resp. projective) modules over a ring $A$, then $M\oplus N$ and $M\otimes_{A}N$ are finitely generated flat (resp. projective) $A$-modules and we have $\mathrm{r}_{M\oplus N} = \mathrm{r}_{M}+\mathrm{r}_{N}$ and $\mathrm{r}_{M\otimes_{A}N} = \mathrm{r}_{M}\cdot\mathrm{r}_{N}$. 

In Lemma \ref{Lemma Pic-ii H.Bass}, if $n=1$ then $M$ and $N$ are finitely generated projective $A$-modules of constant rank 1. Also note that this result does not hold for $n=0$. For instance, let $I$ and $J$ be coprime ideals of a ring $A$, then $A/I\otimes_{A}A/J=0$ but $A/I$ and $A/J$ are not necessarily $A$-projective (nor $A$-flat). As a specific example, in the ring of integers $\mathbb{Z}$, take $I=2\mathbb{Z}$ and $J=3\mathbb{Z}$.

The following result shows that being of finite type in modules is a local property in the following sense: 

\begin{lemma}\label{Lemma 6 2026 f.g.} Let $M$ be a module over a ring $A$ with the property that for each $\mathfrak{p}\in\Spec(A)$ there exists some $f\in A\setminus\mathfrak{p}$ such that $M_{f}$ is a finitely generated $A_{f}$-module. Then $M$ is a finitely generated $A$-module.
\end{lemma}

\begin{proof} Using the quasi-compactness of $\Spec(A)$, there exist finitely many elements $f_{1},\ldots,f_{n}\in A$ such that $\Spec(A)=\bigcup\limits_{i=1}^{n}D(f_{i})$ and $M_{f_{i}}=(x_{i,1}/1,\ldots, x_{i,d_{i}}/1)$ is a finitely generated $A_{f_{i}}$-module for all $i\in\{1,\ldots,n\}$. We show that $M$ as $A$-module is generated by the elements $x_{i,1},\ldots, x_{i,d_{i}}$ with $i=1,\ldots,n$. If $m\in M$ then for each $i\in\{1,\ldots,n\}$ we may write $m/1=\sum\limits_{k=1}^{d_{i}}(r_{i,k}/f^{s_{k}}_{i})
(x_{i,k}/1)$. Thus there exists a natural number $N\geqslant1$ such that $f^{N}_{i}m=\sum\limits_{k=1}^{d_{i}}r'_{i,k}x_{i,k}$ for all $i\in\{1,\ldots,n\}$. We have $\Spec(A)=\bigcup\limits_{i=1}^{n}D(f^{N}_{i})$ and so
$1=\sum\limits_{i=1}^{n}r''_{i}f^{N}_{i}$. It follows that $m=\sum\limits_{i,k}r'_{i,k}r''_{i}x_{i,k}$. This completes the proof.
\end{proof}

\begin{lemma}\label{Lemma 7 2026 nice} Let $M$ be a module over a ring $A$ with the property that for each $\mathfrak{p}\in\Spec(A)$ there exists some $f\in A\setminus\mathfrak{p}$ such that $M_{f}$ is a free $A_{f}$-module of finite rank. Then $M$ is a projective $A$-module.
\end{lemma}

\begin{proof} By Lemma \ref{Lemma 6 2026 f.g.}, $M$ is a finitely generated $A$-module, and so there exists an exact sequence of $A$-modules: $$\xymatrix{0\ar[r]&K\ar[r]&A^{m}\ar[r]&M\ar[r]&0}$$ where $m\geqslant0$ is a natural number and $K=\Ker(A^{m}\rightarrow M)$. For each prime ideal $\mathfrak{p}$, choose some $f\in A\setminus\mathfrak{p}$ such that $M_{f}$ is a free over $A_{f}$. Then the following exact sequence: $$\xymatrix{0\ar[r]&K_{f}\ar[r]&(A_{f})^{m}
\ar[r]&M_{f}\ar[r]&0}$$ splits, because $M_{f}$ is free. Hence $K_{f}$ is a direct summand of $(A_{f})^{m}$ and thus a finitely generated $A_{f}$-module. Then, again by Lemma \ref{Lemma 6 2026 f.g.}, $K$ is a finitely generated $A$-module. This shows that $M$ is a finitely presented $A$-module. It is well known that every finitely presented flat module is a projective module. 
\end{proof}

\begin{remark}\label{Remark dort 2026} Recall that for any modules $M$ and $N$ over a ring $A$ and for any ring map $A\rightarrow B$ we have the following natural morphism of $B$-modules: $$\phi:\Hom_{A}(M, N)\otimes_{A}B\rightarrow
\Hom_{B}(M\otimes_{A}B, N\otimes_{A}B)$$ which is given by $f\otimes b\mapsto(f\otimes b)^{\ast}$ where the morphism of $B$-modules $(f\otimes b)^{\ast}:M\otimes_{A}B\rightarrow N\otimes_{A}B$ is defined as $x\otimes b'\mapsto f(x)\otimes bb'$. 

In addition, if $\mathfrak{p}$ is the contraction of a prime ideal $\mathfrak{q}$ of $B$ under the above ring  map, then the following diagram is commutative: $$\xymatrix{
(\Hom_{A}(M, N)\otimes_{A}B)_{\mathfrak{q}}
\ar[r]^{\phi_{\mathfrak{q}}\:\:\:\:\:\:\:\:}\ar[d]^{} &
(\Hom_{B}(M\otimes_{A}B, N\otimes_{A}B))_{\mathfrak{q}}\ar[d]^{}\\
\Hom_{A_{\mathfrak{p}}}(M_{\mathfrak{p}}, N_{\mathfrak{p}})\otimes_{A_{\mathfrak{p}}}B_{\mathfrak{q}}
\ar[r]^{}&
\Hom_{B_{\mathfrak{q}}}((M\otimes_{A}B)_{\mathfrak{q}}, (N\otimes_{A}B)_{\mathfrak{q}})}$$
where the unnamed arrows are the natural maps. 
\end{remark}

We have then the following result which generalizes \cite[Proposition 7.3]{Bass} (see Corollary \ref{Remark 1 2026}).

\begin{lemma}\label{Remark 2 2026} Let $M$ be a finitely presented module over a ring $A$ and $A\rightarrow B$ be a flat ring map. Then for any $A$-module $N$ the natural morphism of $B$-modules: $$\Hom_{A}(M, N)\otimes_{A}B\rightarrow
\Hom_{B}(M\otimes_{A}B, N\otimes_{A}B)$$ is an isomorphism.
\end{lemma}

\begin{proof} By applying the left exact contravariant functor $\Hom_{A}(-,N)$ to a finite presentation of $M$: $$\xymatrix{A^{n}\ar[r]&A^{m}\ar[r]&M\ar[r]&0}$$
we get the following exact sequence of $A$-modules: 
$$\xymatrix{0\ar[r]&\Hom_{A}(M,N)\ar[r]&
N^{m}\ar[r]&N^{n}.}$$
Since $B$ is $A$-flat, the following exact sequence of $B$-modules is obtained: $$\xymatrix{0\ar[r]&\Hom_{A}(M,N)\otimes_{A}B\ar[r]&
N^{m}\otimes_{A}B\ar[r]&N^{n}\otimes_{A}B.}$$
By applying the right exact functor $-\otimes_{A}B$ to the above finite presentation of $M$, we also get the following exact sequence of $B$-modules: $$\xymatrix{B^{n}\ar[r]&B^{m}
\ar[r]&M\otimes_{A}B\ar[r]&0.}$$
Then by applying the left exact contravariant functor $\Hom_{B}(-,N\otimes_{A}B)$ to the above exact sequence we get the following exact sequence of $B$-modules:
$$\xymatrix{0\ar[r]&\Hom_{B}(M\otimes_{A}B,N\otimes_{A}B)
\ar[r]&(N\otimes_{A}B)^{m}\ar[r]&
(N\otimes_{A}B)^{n}.}$$
It can be seen that the following diagram is commutative (with exact rows):
$$\xymatrix{0\ar[r]&\Hom_{A}(M,N)\otimes_{A}B
\ar[r]\ar[d]^{\phi}&
N^{m}\otimes_{A}B\ar[r]\ar[d]^{\simeq}&
N^{n}\otimes_{A}B\ar[d]^{\simeq} \\ 0
\ar[r]&\Hom_{B}(M\otimes_{A}B,N\otimes_{A}B)\ar[r]&
(N\otimes_{A}B)^{m}\ar[r]&(N\otimes_{A}B)^{n}.}$$ 
Then by the five lemma, the natural map $\phi$ is an isomorphism. 
\end{proof}

By invertible module over a ring $A$ we mean a finitely generated projective $A$-module of constant rank 1.

If $f:A\rightarrow B$ is a ring map and $M$ is a finitely
generated flat (resp. projective) module over $A$, then $M\otimes_{A}B$ is a finitely generated flat (resp. projective) module over $B$, and $\mathrm{r}_{M\otimes_{A}B}=\mathrm{r}_{M}\circ f^{\ast}$ where the map $f^{\ast}:\Spec(B)\rightarrow\Spec(A)$ is induced by $f$. In particular, if $M$ is an invertible $A$-module, then $M\otimes_{A}B$ is an invertible $B$-module.  

The above results provide an alternative proof to the following important fact:

\begin{theorem}\label{Coro toofan nice 1} For a module $M$ over a ring $A$ the following statements are equivalent: \\
$\mathbf{(i)}$ $M$ is an invertible $A$-module. \\
$\mathbf{(ii)}$ The natural morphism of $A$-modules $\widehat{M}\otimes_{A}M\rightarrow A$ 
is an isomorphism. \\
$\mathbf{(iii)}$ There are finitely many elements $f_{1},\ldots,f_{n}\in A$ which generate the unite ideal of $A$ and $M_{f_{i}}\simeq A_{f_{i}}$ as $A_{f_{i}}$-modules for all $i$. 
\end{theorem}

\begin{proof} (i)$\Rightarrow$(ii): Since $M$ is a finitely generated projective $A$-module, we have $M\oplus N\simeq A^{n}$ for some $A$-module $N$ and some $n\geqslant0$. It follows that $\Hom_{A}(M\oplus N, A)\simeq \widehat{M}\oplus\widehat{N}\simeq A^{n}$. This shows that $\widehat{M}$ is also a finitely generated projective $A$-module. In addition, the rank maps of $M$ and $\widehat{M}$ are the same, because by Lemma  \ref{Remark 2 2026} (note that every finitely generated projective module is finitely presented), we have $(\widehat{M})_{\mathfrak{p}}\simeq\widehat
{(M_{\mathfrak{p}})}$ for all $\mathfrak{p}\in\Spec(A)$. Then $\widehat{M}\otimes_{A}M$ is also a finitely generated projective $A$-module and its rank map $\mathrm{r}_{\widehat{M}\otimes_{A}M}=
(\mathrm{r}_{\widehat{M}})\cdot(\mathrm{r}_{M})$ is the constant function 1. By Lemma \ref{lemma Pic-iii}, the natural morphism $\widehat{M}\otimes_{A}M\rightarrow A$ is surjective. But it can be easily seen that if $N$ is a finitely generated flat  $A$-module and $N'$ is a finitely generated projective $A$-module whose rank maps are the same, then every 
surjective morphism of $A$-modules $f:N\rightarrow N'$ is an isomorphism. Thus  the natural morphism $\widehat{M}\otimes_{A}M\rightarrow A$ 
is an isomorphism. \\
(ii)$\Rightarrow$(i): This follows from Lemma \ref{Lemma Pic-ii H.Bass}. \\
(ii)$\Rightarrow$(iii): We may write $1=\sum\limits_{i=1}^{n}\phi_{i}(x_{i})$ where $\phi_{i}\in\widehat{M}$ and $x_{i}\in M$ for all $i$. Setting $f_i:=\phi_{i}(x_{i})$ for all $i$. For each $k$, consider the induced morphism of $A_{f_{k}}$-modules $\psi_{k}:M_{f_{k}}\rightarrow A_{f_{k}}$ which is given by $x/f^{d}_{k}\mapsto\phi_{k}(x)/f^{d}_{k}$. This map is surjective, because $\psi_{k}(x_{k}/1)=f_{k}/1$, a unit in $A_{f_{k}}$. Since $M$ is an invertible module over $A$, so $M_{f_{k}}$ is an invertible module over $A_{f_{k}}$. Hence, $\psi_{k}$ is an isomorphism. In fact, for any  $\phi\in\widehat{M}$ and for any $x\in M$, the induced map $M_{f}\rightarrow A_{f}$ given by $m/f^{d}\mapsto\phi(m)/f^{d}$ is an isomorphism of $A_{f}$-modules where $f:=\phi(x)$. \\
(iii)$\Rightarrow$(i): By Lemmas \ref{Lemma 6 2026 f.g.} and \ref{Lemma 7 2026 nice}, $M$ is a finitely generated projective $A$-module. It is also clear that $M_{\mathfrak{p}}\simeq A_{\mathfrak{p}}$ as $A_{\mathfrak{p}}$-modules for all $\mathfrak{p}\in\Spec(A)$, indeed $f_{i}\notin\mathfrak{p}$ for some $i$ and so we have the natural ring maps decomposition $\xymatrix{A\ar[r]&A_{f_{i}}\ar[r]&A_{\mathfrak{p}}}$, then
$M_{\mathfrak{p}}\simeq M\otimes_{A}A_{\mathfrak{p}}\simeq
M\otimes_{A}(A_{f_{i}\otimes_{A_{f_{i}}}}A_{\mathfrak{p}})
\simeq M_{f_{i}}\otimes_{A_{f_{i}}}A_{\mathfrak{p}}\simeq
A_{f_{i}}\otimes_{A_{f_{i}}}A_{\mathfrak{p}}\simeq A_{\mathfrak{p}}$. Hence, $M$ is an invertible $A$-module.  
\end{proof}

For any commutative ring $A$, by $\Pic(A)$ we mean the collection of isomorphism classes of invertible modules over $A$. It can be seen that $\Pic(A)$ is indeed a ``set". Using  Theorem \ref{Coro toofan nice 1}, then 
this set $\Pic(A)$ by the  operation $[L_{1}]\cdot [L_{2}]=[L_{1}\otimes_{A}L_{2}]$ is an Abelian group whose identity element is the isomorphism class of $A$, and the inverse of each $[L]\in\Pic(A)$ is the isomorphism class of $\widehat{L}$. The group $\Pic(A)$ is called the \emph{Picard group of $A$}. In this group $\Pic(A)$, we will often denote the isomorphism class $[L]$ simply by $L$ if there is no confusion. 
If $f:A\rightarrow B$ is a morphism of rings, then the map $\Pic(f):\Pic(A)\rightarrow\Pic(B)$ given by $L\mapsto L\otimes_{A}B$ is a morphism of groups. In fact, the Picard group construction is a covariant functor from the category of commutative rings to the category of Abelian groups.

\begin{remark}\label{Remark 3 2026} Let $(M_{i})$ be a direct system of $A$-modules over a directed poset $(I,<)$ and $N$ be an $A$-module. Then for any ring map $A\rightarrow B$, the following diagram is commutative: $$\xymatrix{
\Hom_{A}(\lim\limits_{\overrightarrow{i\in I}}M_{i}, N)\otimes_{A}B
\ar[r]^{}\ar[d]^{\simeq} &
\Hom_{B}(\lim\limits_{\overrightarrow{i\in I}}(M_{i}\otimes_{A}B), N\otimes_{A}B)\ar[d]^{\simeq}\\
\lim\limits_{\overleftarrow{i\in I}}
(\Hom_{A}(M_{i}, N)\otimes_{A}B)
\ar[r]^{}&
\lim\limits_{\overleftarrow{i\in I}}
\Hom_{B}(M_{i}\otimes_{A}B, N\otimes_{A}B)}$$
where the vertical maps are natural isomorphisms. 
\end{remark}
 
Lemma \ref{Remark 2 2026} has also the following consequence (note that the arrow $A\rightarrow B$ is an arbitrary ring map, not necessarily flat): 

\begin{corollary}\label{Remark 1 2026} If $M$ is a finitely generated projective module over a ring $A$, then for any $A$-module $N$ and for any ring map $A\rightarrow B$ the natural morphism of $B$-modules: $$\phi:\Hom_{A}(M, N)\otimes_{A}B\rightarrow
\Hom_{B}(M\otimes_{A}B, N\otimes_{A}B)$$ is an isomorphism. 
\end{corollary}

\begin{proof} It suffices to show that for any prime (even maximal) ideal $\mathfrak{q}$ of $B$ the induced map $\phi_{\mathfrak{q}}:(\Hom_{A}(M, N)\otimes_{A}B)_{\mathfrak{q}}\rightarrow
(\Hom_{B}(M\otimes_{A}B, N\otimes_{A}B))_{\mathfrak{q}}$ is an isomorphism. If $\mathfrak{p}$ is the contraction of $\mathfrak{q}$ under the above ring map, then by Remark \ref{Remark dort 2026}, the following diagram is commutative:
$$\xymatrix{
(\Hom_{A}(M, N)\otimes_{A}B)_{\mathfrak{q}}
\ar[r]^{\phi_{\mathfrak{q}}\:\:\:\:\:\:\:\:}\ar[d]^{} &
(\Hom_{B}(M\otimes_{A}B, N\otimes_{A}B))_{\mathfrak{q}}\ar[d]^{}\\
\Hom_{A_{\mathfrak{p}}}(M_{\mathfrak{p}}, N_{\mathfrak{p}})\otimes_{A_{\mathfrak{p}}}B_{\mathfrak{q}}
\ar[r]^{\phi'\:\:\:\:\:\:\:\:\:}&
\Hom_{B_{\mathfrak{q}}}((M\otimes_{A}B)_{\mathfrak{q}}, (N\otimes_{A}B)_{\mathfrak{q}})}$$
Since $M$ is a finitely presented $A$-module, and hence $M\otimes_{A}B$ is a finitely presented $B$-module (every finitely generated projective module is finitely presented), by Lemma \ref{Remark 2 2026} the vertical maps are isomorphisms. Thus it suffices to show that the bottom map $\phi'$ is an isomorphism.  We know that every (finitely generated) projective module over a local ring is a free module. Then $M_{\mathfrak{p}}\simeq(A_{\mathfrak{p}})^{d}$ and so $(M\otimes_{A}B)_{\mathfrak{q}}
\simeq(B_{\mathfrak{q}})^{d}$ for some $d\geqslant0$. Then by applying a particular case of Remark \ref{Remark 3 2026} for finite direct sums, we get the following commutative diagram: 
$$\xymatrix{
\Hom_{A_{\mathfrak{p}}}(M_{\mathfrak{p}}, N_{\mathfrak{p}})\otimes_{A_{\mathfrak{p}}}B_{\mathfrak{q}}
\ar[r]^{\phi'\:\:\:\:\:\:\:\:\:}\ar[d]^{\simeq} &
\Hom_{B_{\mathfrak{q}}}((M\otimes_{A}B)_{\mathfrak{q}}, (N\otimes_{A}B)_{\mathfrak{q}})\ar[d]^{\simeq}\\
(N_{\mathfrak{p}})^{d}
\otimes_{A_{\mathfrak{p}}}B_{\mathfrak{q}}
\ar[r]^{}&
((N\otimes_{A}B)_{\mathfrak{q}})^{d}}$$
In the above diagram, the vertical maps and the bottom map are natural isomorphisms. Hence, $\phi'$ is an isomorphism. 
\end{proof} 

\section{Picard group vs the group of units of  the Grothendieck ring}

To prove the first main result of this section (Theorem \ref{coro najib 22}), we need the following key lemmas:  

\begin{lemma}\label{Lemma nabcg 20} If $e$ and $e'$ are idempotents of a ring $A$, then we have the canonical isomorphism of $A$-modules: $$Ae\simeq Ae(1-e')\oplus Ae/Ae(1-e').$$
\end{lemma}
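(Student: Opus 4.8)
The plan is to realize $Re(1-e')$ as a direct summand of $Re$; once this is done the lemma is immediate, since in any splitting the complementary summand is canonically isomorphic to the quotient by the given summand.

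First I would note that, $R$ being commutative, the element $f:=e(1-e')=e-ee'$ is an idempotent of $R$, and that, as an $R$-module, $Re(1-e')$ is exactly the principal ideal $Rf$ (it equals $\{re(1-e'):r\in R\}=R\cdot e(1-e')$). Since $f=(1-e')e\in Re$, we have $Rf\subseteq Re$. Next I would check that multiplication by $f$ gives an $R$-module retraction $\rho\colon Re\rightarrow Rf$, $x\mapsto xf$, of the inclusion $Rf\hookrightarrow Re$: clearly $xf\in Rf$ for all $x\in Re$, and $\rho(y)=yf=y$ for every $y\in Rf$ because $f^{2}=f$. Hence the short exact sequence $0\to\Ker\rho\to Re\xrightarrow{\rho} Rf\to 0$ splits, so $Re\simeq Rf\oplus\Ker\rho$ canonically (via the inclusion of $Rf$ and $\rho$). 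Moreover $\Ker\rho=\{x\in Re:xf=0\}$, and $xf=0$ is equivalent to $x=x(1-f)$, so $\Ker\rho=Re\cap R(1-f)$; one may even observe $\Ker\rho=Ree'$, although this is not needed. The splitting simultaneously provides the canonical isomorphism $\Ker\rho\simeq Re/Rf=Re/Re(1-e')$ (reduction modulo $Rf$ restricted to the complementary summand).

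Combining these gives $Re\simeq Rf\oplus\bigl(Re/Rf\bigr)=Re(1-e')\oplus Re/Re(1-e')$, with all maps involved (the inclusion of $Re(1-e')$, multiplication by $e(1-e')$, and reduction modulo $Re(1-e')$) canonical, as claimed. I do not expect a genuine obstacle here; the only care required is the routine bookkeeping to confirm that $\Ker\rho$ is at once the complementary summand of $Rf$ in $Re$ and, canonically, the quotient $Re/Re(1-e')$.
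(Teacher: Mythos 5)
Your proof is correct, but it splits the sequence by a different mechanism than the paper does. The paper starts from the same short exact sequence $0\to Re(1-e')\to Re\to Re/Re(1-e')\to 0$, but it obtains the splitting abstractly: it observes that $Re/Re(1-e')\simeq Re\otimes_{R}Re'$ is a projective $R$-module (each $Re$ being a direct summand of $R$), and then invokes the fact that a short exact sequence with projective cokernel splits. You instead exhibit the splitting explicitly: you note that $f:=e(1-e')$ is an idempotent lying in $Re$, that $Re(1-e')=Rf$, and that multiplication by $f$ is a retraction $Re\to Rf$ of the inclusion, so $Re=Rf\oplus\Ker\rho$ with $\Ker\rho\simeq Re/Rf$ canonically (and indeed $\Ker\rho=Ree'$). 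Your route is more elementary --- it uses no projectivity at all --- and it has the advantage of producing a completely explicit, canonical splitting map, which makes the word ``canonical'' in the statement concrete; the paper's route is shorter to state and fits the module-theoretic toolkit (trace ideals, projectivity of $Re$, the identification $Re/Re(1-e')\simeq Re\otimes_R Re'$) that it reuses in the surrounding results, e.g.\ in the proof of Theorem \ref{Theorem Pic-iv}. Both arguments are complete and correct; your verifications that $f^{2}=f$, that $\rho$ restricts to the identity on $Rf$, and that the complementary summand maps isomorphically onto the quotient are exactly the bookkeeping needed.
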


\begin{proof} Consider the following canonical short exact sequence of $A$-modules: $$\xymatrix{0\ar[r]&Ae(1-e')\ar[r]&Ae\ar[r]&Ae/Ae(1-e')
\ar[r]&0.}$$
For each idempotent $e\in A$, we have $Ae\cap A(1-e)=0$ thus $Ae\oplus A(1-e)\simeq A$ and so $Ae$ is a (finitely generated) projective $A$-module. It follows that $Ae/Ae(1-e')$ is also a projective $A$-module, because
$Ae/Ae(1-e')\simeq Ae\otimes_{A}Ae'$. Hence, the above sequence splits. So, $Ae\simeq Ae(1-e')\oplus Ae/Ae(1-e')$.
\end{proof}

\begin{lemma}\label{lemma najib 21} If $e$ and $e'$ are orthogonal idempotents of a ring $A$, then we have the canonical isomorphism of $A$-modules $A(e+e')\simeq Ae\oplus Ae'$.
\end{lemma}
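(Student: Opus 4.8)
The plan is to exhibit an explicit splitting of the obvious surjection $R(e+e') \to Re'$ (or $Re$) whose kernel is $Re$ (or $Re'$), using crucially that $ee' = 0$. First I would note that since $e$ and $e'$ are orthogonal idempotents, $f := e + e'$ is again an idempotent (as $f^2 = e^2 + 2ee' + e'^2 = e + e' = f$), so $Rf$ is itself a direct summand of $R$ and in particular a projective $R$-module. The submodule $Re \subseteq Rf$ is also a direct summand of $R$; the strategy is to show $Re$ is a direct summand of $Rf$ as well, with complementary summand $Re'$.

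Next I would make the decomposition concrete. Multiplication by $e$ gives an $R$-module map $\mu_e : Rf \to Rf$; since $ef = e(e+e') = e$, this map sends $f \mapsto e$ and more generally $rf \mapsto re$, so its image is exactly $Re$ and it is idempotent as an endomorphism ($\mu_e \circ \mu_e = \mu_{e^2} = \mu_e$). Likewise $\mu_{e'} : Rf \to Rf$, multiplication by $e'$, is an idempotent endomorphism with image $Re'$, and $\mu_e + \mu_{e'} = \mu_{e+e'} = \mu_f = \operatorname{Id}_{Rf}$ while $\mu_e \circ \mu_{e'} = \mu_{ee'} = 0$. Thus $\mu_e, \mu_{e'}$ are complementary orthogonal idempotent endomorphisms of $Rf$, which immediately yields $Rf = \operatorname{Im}(\mu_e) \oplus \operatorname{Im}(\mu_{e'}) = Re \oplus Re'$, the isomorphism being $re \oplus r'e' \mapsto re + r'e'$ with inverse $x \mapsto (ex, e'x)$.

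Alternatively, and perhaps more in the spirit of the preceding lemmas, I would build the short exact sequence $0 \to Re \to R(e+e') \to R(e+e')/Re \to 0$ and identify the quotient with $Re'$: the map $Re' \to R(e+e')/Re$ sending $r e' \mapsto r e' + Re$ is surjective (since $e + e' = e + e'$ already, $e' = (e+e') - e \equiv e+e' \pmod{Re}$... more carefully, every element $r(e+e')$ is congruent mod $Re$ to $r e'$ because $re \in Re$), and injective because $Re' \cap Re = Ree' = 0$ inside $R(e+e')$ — here orthogonality is exactly what kills the intersection. Since $Re'$ is projective (it is a direct summand of $R$), the sequence splits, giving $R(e+e') \simeq Re \oplus Re'$.

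I do not anticipate a genuine obstacle here; the only point requiring care is verifying $Re \cap Re' = 0$ as submodules of $R$ (equivalently of $R(e+e')$), which is where orthogonality $ee' = 0$ enters: if $x = re = r'e'$ then $x = xe = r'e'e = 0$. Everything else is the standard fact, recalled already in Lemma \ref{Lemma nabcg 20}, that $Re \oplus R(1-e) \simeq R$ makes each $Re$ projective, so any short exact sequence with projective cokernel splits. The main thing to keep straight is which idempotent indexes which summand, but that is bookkeeping rather than a real difficulty.
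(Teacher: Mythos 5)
Your proposal is correct; both of your routes work, and the small point where orthogonality is needed ($Re\cap Re'=0$, or $\mu_e\circ\mu_{e'}=0$) is verified. The paper's own proof is a third, equivalent packaging of the same maps: it defines $f:R\rightarrow Re\oplus Re'$ by $f(r)=(re,re')$, checks that $f$ is surjective with $\Ker(f)=R(1-e-e')$, and then uses that $e+e'$ is idempotent to identify $\Ann_{R}(e+e')=R(1-e-e')$, so that $R(e+e')\simeq R/\Ann(e+e')\simeq Re\oplus Re'$. Your first argument is essentially this same isomorphism seen internally: restricting the paper's $f$ to $R(e+e')$ gives exactly your map $x\mapsto(ex,e'x)$, but by phrasing it through the complementary idempotent endomorphisms $\mu_e,\mu_{e'}$ of $R(e+e')$ you avoid the detour through the quotient $R/R(1-e-e')$ and the annihilator computation, which is arguably cleaner. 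Your second argument (split the sequence $0\rightarrow Re\rightarrow R(e+e')\rightarrow R(e+e')/Re\rightarrow 0$ using projectivity of $Re'$) is genuinely different in flavor and matches the method of Lemma \ref{Lemma nabcg 20}, at the cost of invoking projectivity where the paper's and your first argument need only elementary idempotent manipulations; it also produces the splitting less explicitly. Either version would serve in place of the paper's proof.
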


\begin{proof} The map $f:A\rightarrow Ae\oplus Ae'$ given by $f(r)=(re,re')$ is a morphism of $A$-modules. Clearly $\Ker(f)=A(1-e-e')$, because if $f(r)=0$, then $re=re'=0$ and so $r=r(1-e-e')\in A(1-e-e')$. The map $f$ is also surjective, because if $(a,b)\in A^{2}$ then $f(ae+be')=(ae,be')$. Thus $f$ induces an isomorphism of $A$-modules $A/A(1-e-e')\simeq Ae\oplus Ae'$.
We also have $\Ann(e+e')=A(1-e-e')$, because $e+e'$ is  idempotent. Thus $A(e+e')\simeq A/\Ann(e+e')\simeq Ae\oplus Ae'$.
\end{proof}

\begin{theorem}\label{coro najib 22} If $e$ and $e'$ are idempotents of a ring $A$, then we have the canonical isomorphism of $A$-modules: $$Ae\oplus Ae'\simeq Ae/Ae(1-e')\oplus Ae'/Ae'(1-e)\oplus A(e+e'-2ee').$$
\end{theorem}

\begin{proof} We may write $e+e'-2ee'=e(1-e')+e'(1-e)$. Then setting $a:=e(1-e')$ and $b:=e'(1-e)$. Clearly $ab=0$. Then by Lemma \ref{lemma najib 21},  $A(a+b)\simeq Aa\oplus Ab$. Then by applying Lemma \ref{Lemma nabcg 20}, the assertion is easily deduced.
\end{proof} 

For any ring $A$, by $\mathscr{B}(A)=\{e\in A: e=e^{2}\}$ we mean the set of all idempotent elements of $A$ which is an Abelian group under the operation $e\oplus e':=e+e'-2ee'$ (the calligraphy letter $\mathscr{B}$ stands for George Boole). For more information on this group we refer the interested reader to \cite{Tarizadeh-Taheri}.

\begin{lemma}\label{Lemma idempots iso gro} For any ring $A$, we have the canonical isomorphism of groups $\mathscr{B}(A)\simeq H_{0}(A)^{\ast}$.
\end{lemma}

\begin{proof} If $e\in A$ is an idempotent then we have a continuous map $\phi_{e}:\Spec(A)\rightarrow \mathbb{Z}$ which is defined as $\phi_{e}(\mathfrak{p})=1$ if $e\in\mathfrak{p}$ otherwise $\phi_{e}(\mathfrak{p})=-1$. We show that the map $e\mapsto\phi_{e}$ is an isomorphism of groups from the additive group
$\mathscr{B}(A)$ onto $H_{0}(A)^{\ast}$. Clearly $\phi_{e}\in H_{0}(A)^{\ast}$, because $\phi_{e}^{2}=1$ (also note that $\phi_{e}=-\phi_{1-e}$).
Then we show that the above map is a morphism of groups, i.e., $\phi_{e\oplus e'}=\phi_{e}\cdot\phi_{e'}$ for any idempotents $e$ and $e'$ of $A$. Let $\mathfrak{p}$ be a prime ideal of $A$. If both $e,e'\in\mathfrak{p}$ then $e\oplus e'\in\mathfrak{p}$ and so $\phi_{e\oplus e'}(\mathfrak{p})=1=(\phi_{e}\cdot\phi_{e'})(\mathfrak{p})$. Suppose $e\in\mathfrak{p}$ but $e'\notin\mathfrak{p}$ then $e\oplus e'\notin\mathfrak{p}$, because $e'(e\oplus e')=e'(1-e)\notin\mathfrak{p}$, so in this case $\phi_{e\oplus e'}(\mathfrak{p})=-1=(\phi_{e}\cdot\phi_{e'})(\mathfrak{p})$. Finally, suppose $e,e'\notin\mathfrak{p}$ then $e\oplus e'\in\mathfrak{p}$, because we have $e\oplus e'=e(1-e')+e'(1-e)\in\mathfrak{p}$, thus in this case $(\phi_{e}\cdot\phi_{e'})(\mathfrak{p})=
\phi_{e}(\mathfrak{p})\cdot\phi_{e'}(\mathfrak{p})=
(-1)\cdot(-1)=1=\phi_{e\oplus e'}(\mathfrak{p})$. Hence, the above map is a morphism of groups. If $\phi_{e}=\phi_{e'}$ for some idempotents $e,e'\in A$, then clearly $D(e)=D(e')$ and so $e=e'$. If $f\in H_{0}(A)^{\ast}$ then by \cite[Theorem 1.1]{A. Tarizadeh Racsam2}, there exists a (unique) idempotent $e\in A$ such that $f^{-1}(\{1\})=V(e)$. Since $f^{2}=1$, so $\phi_{e}=f$. This completes the proof.
\end{proof}

It is well known that the Picard group $\Pic(A)$ can be canonically embedded in the group of units of the Grothendieck ring $K_{0}(A)$. In the following result, not only it is proved by a new method, we also complete this observation by further involving the additive group of idempotents $\mathscr{B}(A)$. This result, in particular, paves the way to understand the structure of the group $K_{0}(A)^{\ast}$ for a certain class of rings (see Corollary \ref{coro lbp 24}).

\begin{theorem}\label{Theorem Pic-iv} For any ring $A$ we have the following complex of Abelian groups which is exact at  $\Pic(A)$ and $\mathscr{B}(A)$: $$\xymatrix{0\ar[r]&\Pic(A)\ar[r]^{f}&K_{0}(A)^{\ast}
\ar[r]^{g}&\mathscr{B}(A)\ar[r]&0.}$$
In addition, there exists a morphism of groups $h:\mathscr{B}(A)\rightarrow K_{0}(A)^{\ast}$ such that $gh$ is the identity map.
\end{theorem}

\begin{proof} We first show that the map $f:\Pic(A)\rightarrow K_{0}(A)^{\ast}$ given by $L\mapsto[L,0]$ is an injective morphism of groups. If $L\in\Pic(A)$ then $L\otimes_{A}\widehat{L}\simeq A$. Thus $[L,0]\cdot[\widehat{L},0]=[A,0]$. Hence, $[L,0]$ is invertible in $K_{0}(A)$ and so the above map is well-defined. This map is clearly a morphism of groups. For injectivity, suppose $[L,0]=[A,0]$. Then there exists some natural number $n\geqslant0$ such that $L\oplus A^{n}\simeq A^{n+1}$ as $A$-modules. It suffices to show that $L\simeq A$ as $A$-modules. We will use exterior powers to obtain such an isomorphism. It is well known that for any two modules $M$ and $N$ over a ring $A$, we have the canonical isomorphism of $A$-modules $\bigwedge^{k}(M\oplus N)\simeq\bigoplus\limits_{p+q=k}\bigwedge^{p}(M)
\otimes_{A}\bigwedge^{q}(N)$. It is also well known that if $F$ is a free $A$-module of rank $d\geqslant0$, then $\Lambda^{k}(F)$ is a free $A$-module of rank $\binom{d}{k}$
for all $0\leqslant k\leqslant d$ and $\Lambda^{k}(F)=0$ for all $k>d$. Since $L$ is a finitely generated projective $A$-module of rank 1, thus $\bigwedge^{k}(L)=0$ for all $k\geqslant2$, because if $\mathfrak{p}\in\Spec(A)$ then we have the canonical isomorphisms of $A_{\mathfrak{p}}$-modules $\big(\bigwedge_{A}^{k}(L)\big)_{\mathfrak{p}}\simeq
\bigwedge_{A}^{k}(L)\otimes_{A}A_{\mathfrak{p}}\simeq
\bigwedge_{A_{\mathfrak{p}}}^{k}(L_{\mathfrak{p}})\simeq
\bigwedge_{A_{\mathfrak{p}}}^{k}(A_{\mathfrak{p}})=0$ for all $k\geqslant2$.
Now using these observations, we have $A\simeq\bigwedge^{n+1}(A^{n+1})
\simeq\bigwedge^{n+1}(L\oplus A^{n})\simeq\bigoplus\limits_{p+q=n+1}\bigwedge^{p}(L)
\otimes_{A}\bigwedge^{q}(A^{n})\simeq\bigwedge^{1}(L)
\otimes_{A}\bigwedge^{n}(A^{n})\simeq L\otimes_{A}A\simeq L$ as $A$-modules. Next, we construct the map $g$.  
The canonical ring map $K_{0}(A)\rightarrow H_{0}(A)$ given by $[M,N]\mapsto\mathrm{r}_{M}-\mathrm{r}_{N}$ induces a group map $K_{0}(A)^{\ast}\rightarrow H_{0}(A)^{\ast}$.
Then using this and Lemma \ref{Lemma idempots iso gro}, we obtain a group map $g:K_{0}(A)^{\ast}\rightarrow\mathscr{B}(A)$ given by $[M,N]\mapsto e$ where $e\in A$ is a (unique) idempotent with $(\mathrm{r}_{M}-\mathrm{r}_{N})^{-1}(\{1\})=V(e)$. If $e\in A$ is an idempotent, then $1-2e$ is invertible in $A$, because $(1-2e)^{2}=1$. Also, the element $[Ae,0]$ is an idempotent of $K_{0}(A)$, because
$Ae\otimes_{A}Ae\simeq Ae\otimes_{A}A/(1-e)\simeq Ae/Ae(1-e)\simeq Ae$
and so $[Ae,0]\cdot[Ae,0]=[Ae\otimes_{A}Ae,0]=[Ae,0]$.
Using these observations, we obtain a function $h:\mathscr{B}(A)\mapsto K_{0}(A)^{\ast}$ which is defined by $e\mapsto[A,Ae\oplus Ae]$. By Theorem \ref{coro najib 22}, $h$ is a group morphism.
It can be easily seen that $gh$ is the identity map. Finally, we show that $gf=0$, i.e., $\Ima(f)\subseteq\Ker(g)$.
If $L\in\Pic(A)$ then $L$ is a finitely generated projective $A$-module of rank 1. Hence, its rank map is the constant function at 1. Thus we have $(\mathrm{r}_{L}-\mathrm{r}_{0})^{-1}(\{1\})=
(\mathrm{r}_{L})^{-1}(\{1\})=\Spec(A)=V(0)$. This shows that $g([L,0])=0$.
\end{proof}

\begin{remark} In the proof of Theorem \ref{Theorem Pic-iv}, we observed that if $M$ is a finitely generated projective $A$-module of rank 1, then from $M\oplus A^{n}\simeq A^{n+1}$ we obtained that $M\simeq A$. But it is important to notice that this conclusion does not hold in general. More precisely, let $M$ be a module over a ring $A$ such that there exists natural numbers $d,n\geqslant0$ for which $M\oplus A^{d}\simeq A^{n}$ as $A$-modules (in this case, $M$ is called a stably free module). Then clearly $d\leqslant n$ and $M$ is a finitely generated projective $A$-module of constant rank $n-d$. If $n-d\geqslant2$ then $M$ is not necessarily a free module. That is, there are stably free modules which are not free (see e.g. \cite[Chap. 3]{Ischebeck-Rao}, \cite[p. 301]{Lam} or \cite[Chap. XXI, \S2]{Lang}).
\end{remark}

The complex of Theorem \ref{Theorem Pic-iv} is not exact in general. In fact, Pierre Deligne pointed out to us that by using Jouanolou’s trick, the exactness of this sequence fails for arbitrary rings. However, it is natural to ask when the complex of Theorem \ref{Theorem Pic-iv} will be exact. This leads us to the following notion: we say that a ring $A$ has the \emph{line bundle property} if whenever $M$ is a finitely generated projective $A$-module of constant rank $d+1$ with $d\geqslant0$, then $A^{n}\oplus M\simeq A^{n+d}\oplus\Lambda^{d+1}(M)$ as $A$-modules for some $n\geqslant0$. It is well known (due to J.P. Serre) that every Noetherian one dimensional ring has the line bundle property (with $n=0$). Its proof can be found in \cite[Chap I, Proposition 3.4]{Weibel}. The ``line bundle property'' is a typically 1-dimensional phenomenon.

\begin{corollary}\label{coro lbp 24} If a ring $A$ has the line bundle property, then we have the following split exact sequence of Abelian groups: $$\xymatrix{0\ar[r]&\Pic(A)\ar[r]&K_{0}(A)^{\ast}
\ar[r]&\mathscr{B}(A)\ar[r]&0.}$$
\end{corollary}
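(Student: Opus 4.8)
The plan is to exhibit the maps already constructed in Theorem \ref{Theorem Pic-iv} as the arrows of the stated sequence, and to show that the line bundle property upgrades the "mere exactness on the ends" there into genuine exactness in the middle together with a splitting. So the sequence to be analysed is
$$\xymatrix{0\ar[r]&\Pic(R)\ar[r]^{f}&K_{0}(R)^{\ast}\ar[r]^{g}&\mathcal{B}(R)\ar[r]&0}$$
with $f$ and $g$ the morphisms of Theorem \ref{Theorem Pic-iv}. From that theorem we already know: $f$ is injective; $g$ is surjective; $gf=0$, i.e. $\Ima(f)\subseteq\Ker(g)$; and $h:\mathcal{B}(R)\rightarrow K_{0}(R)^{\ast}$, $e\mapsto[R,Re\oplus Re]$, is a group morphism with $gh=\mathrm{Id}$. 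The last identity already gives that $g$ is a split epimorphism, so it only remains to prove the one missing inclusion $\Ker(g)\subseteq\Ima(f)$; exactness at the two ends and the splitting are then free, and the decomposition $K_{0}(R)^{\ast}\simeq\Pic(R)\oplus\mathcal{B}(R)$ follows by the splitting lemma for abelian groups.

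First I would reduce an arbitrary element of $\Ker(g)$ to a normal form. Any element of $K_{0}(R)^{\ast}$ can be written as $[L,R^{d}]$ with $L$ a finitely generated projective $R$-module (as recalled just before Theorem \ref{Theorem Pic-iv}); being in $\Ker(g)$ means $\mathrm{r}_{L}-d$ is the constant function $0$, i.e. $L$ has constant rank $d$. Since $[L,R^{d}]$ is a unit there is a finitely generated projective $R$-module $L'$ and an integer $e\geqslant0$ with $(L\otimes_{R}L')\oplus R^{e}\simeq R^{d\cdot(\text{rk }L')+e}$ up to the usual stabilisation; more simply, invertibility of $[L,R^d]$ forces $\mathrm{r}_L$ to be a unit in $H_0(R)$ and hence (being the non-negative constant $d$) equal to $1$, so in fact $d=1$ and $L$ has constant rank $1$. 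Thus $\Ker(g)$ consists exactly of classes $[L,R]$ with $L$ finitely generated projective of constant rank $1$ — but such $L$ is precisely an invertible module, so $[L,R] = f([L])\cdot[R,R]^{-1}\cdot\ldots$ — wait, more directly $[L,R]=f([L])$ is not right because $f([L])=[L,0]$. The honest statement is $[L,R]=[L,0]\cdot[R,0]^{-1}=f([L])$ since $[R,0]=1$ in $K_0(R)$. Hence every element of $\Ker(g)$ is $f([L])$ with $L$ invertible, giving $\Ker(g)\subseteq\Ima(f)$.

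So in fact the line bundle property is \emph{not} needed for exactness at the $\Pic$ end or in the middle — it is needed only to see that $h$ (equivalently the splitting) is compatible, or rather the real subtlety is whether $g$ actually lands in $\mathcal B(R)$ surjectively without the hypothesis, which Theorem \ref{Theorem Pic-iv} already granted. Let me reconsider where the hypothesis enters: the argument above showing "$[L,R^d]$ a unit $\Rightarrow d=1$" is where I must be careful, because $\mathrm{r}_L$ being a unit in $H_0(R)$ only forces $\mathrm{r}_L\in\{+1,-1\}$ pointwise, i.e. $\mathrm{r}_L$ is $1$ on some clopen $U=D(e)$ and would have to be $-1$ elsewhere — impossible for a rank map — hence $\mathrm{r}_L\equiv 1$ on the support and $L$ has constant rank $1$ after all; so no hypothesis is needed for that either. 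The genuine role of the line bundle property is to guarantee that the general unit $[L,R^d]$ with $L$ of \emph{non-constant} rank can be written, on each clopen piece $D(e_k)$ where $\mathrm r_L=k$, as $[\Lambda^k(L|_{D(e_k)}),\,(R e_k)^{?}]$ modulo stably-free corrections — i.e. it lets one replace $L$ by the line bundle $\Lambda^{\text{top}}$ on each rank-constant clopen stratum, which is what identifies the unit with $f(\text{something})\cdot h(\text{idempotent})$. Concretely: decompose $\Spec(R)=\coprod D(e_k)$ by the rank of $L$ as in Theorem \ref{Theorem new charct proj mod}; on each factor the line bundle property gives $R e_k^{\,n}\oplus L e_k\simeq R e_k^{\,n+k-1}\oplus\Lambda^{k}(Le_k)$, so in $K_0(Re_k)^{\ast}$ the class of $Le_k$ is that of the invertible module $\Lambda^k(Le_k)$ times a stably-free (hence, after the idempotent bookkeeping of Corollary \ref{coro najib 22}, a $h$-image) contribution; reassembling via $K_0(R)\simeq\prod K_0(Re_k)$ on the clopen decomposition exhibits the given unit as $f(M)$ for a suitable invertible $M$ when $g$ of it is trivial. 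This reassembly step — checking that the local line-bundle splittings glue to a global statement about $K_0(R)^\ast$ and that the resulting $M$ is honestly invertible over $R$ — is the part I expect to require the most care; once it is done, $\Ker(g)=\Ima(f)$, and combined with the already-established $gh=\mathrm{Id}$ the sequence is split exact, yielding $K_0(R)^{\ast}\simeq\Pic(R)\oplus\mathcal B(R)$.
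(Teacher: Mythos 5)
Your reduction of the problem to the single inclusion $\Ker(g)\subseteq\Ima(f)$ is the right first step and matches the paper, but the argument you give for that inclusion is wrong, and in a way that cannot be repaired without the hypothesis. First, the kernel is misidentified: under the isomorphism $\mathcal{B}(R)\simeq H_{0}(R)^{\ast}$ of Lemma \ref{Lemma idempots iso gro}, the identity element $0\in\mathcal{B}(R)$ corresponds to the constant function $1$, so $[M,R^{d}]\in\Ker(g)$ means $\mathrm{r}_{M}-d\equiv 1$, i.e.\ $M$ has constant rank $d+1$ --- not constant rank $d$, and not $d=1$ with $M$ of rank $1$. Second, you conflate the pair notation with multiplication: $[L,R]$ denotes the difference $[L,0]-[R,0]$ in the Grothendieck ring, not $[L,0]\cdot[R,0]^{-1}$, so the identity ``$[L,R]=f([L])$'' is false; indeed a class $[L,R]$ with $\mathrm{r}_{L}\equiv 1$ maps to $0$ in $H_{0}(R)$ and is not even a unit. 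Third, your resulting conclusion that the line bundle property is not needed for exactness in the middle contradicts the paper's own remark (due to Deligne, via Jouanolou's trick) that the sequence fails to be exact for general rings, so any hypothesis-free proof of $\Ker(g)\subseteq\Ima(f)$ must contain an error --- and the above are the errors.

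Where the hypothesis really enters is exactly the step your final paragraph gestures at but does not need in that form: for $[M,R^{d}]\in\Ker(g)$ the module $M$ already has \emph{constant} rank $d+1$ (no clopen stratification by rank, no decomposition $K_{0}(R)\simeq\prod K_{0}(Re_{k})$, and no ``reassembly'' are required), so $N:=\Lambda^{d+1}(M)$ is an invertible module, and the line bundle property gives $R^{n}\oplus M\simeq R^{n+d}\oplus N$ for some $n\geqslant 0$, whence $[M,R^{d}]=[N,0]=f(N)\in\Ima(f)$. Your sketch of gluing local line-bundle splittings over the rank strata is left unexecuted (``the part I expect to require the most care'') and rests on the incorrect description of the kernel, so as it stands the proposal does not prove the inclusion that the corollary actually requires.
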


\begin{proof} By Theorem \ref{Theorem Pic-iv}, it suffices to show that $\Ker(g)\subseteq\Ima(f)$ where $f:\Pic(A)\rightarrow K_{0}(A)^{\ast}$ and $g:K_{0}(A)^{\ast}\rightarrow\mathscr{B}(A)$.
Take $[M,A^{d}]\in\Ker(g)$ where $M$ is a finitely generated projective $A$-module and $d\geqslant0$. It follows that $(\mathrm{r}_{M}-\mathrm{r}_{A^{d}})^{-1}(\{1\})=
(\mathrm{r}_{M}-d)^{-1}(\{1\})=V(0)=\Spec(A)$. This shows that $M$ is of constant rank $d+1$. Thus $L:=\Lambda^{d+1}(M)$ is a finitely generated projective $A$-module of rank $1$ and so $L\in\Pic(A)$. Since $A$ has the line bundle property, $A^{n}\oplus M\simeq A^{n+d}\oplus L$ as $A$-modules for some $n\geqslant0$. This shows that $[M,A^{d}]=[L,0]\in\Ima(f)$.
\end{proof}

In particular, if a ring $A$ has the line bundle property with no nontrivial idempotents, then $K_{0}(A)^{\ast}\simeq\Pic(A)\oplus\mathbb{Z}/2$ where $\mathbb{Z}/2=\{0,1\}$ is the additive group of integers modulo 2.

\begin{corollary} If a ring $A$ has the line bundle property, then the following assertions hold: \\
$\mathbf{(i)}$ If $\Min(A)$ is a finite set, then $K_{0}(A)^{\ast}\simeq\Pic(A)\oplus(\mathbb{Z}/2)^{d}$. \\
$\mathbf{(ii)}$ If $\Max(A)$ is a finite set, then $K_{0}(A)^{\ast}\simeq(\mathbb{Z}/2)^{d}$. \\
In addition, $d\geqslant0$ is the number of connected components of $\Spec(A)$. 
\end{corollary}

\begin{proof} By Corollary \ref{coro lbp 24}, we have $K_{0}(A)^{\ast}\simeq\Pic(A)\oplus\mathscr{B}(A)$.
If a ring has finitely many minimal primes or finitely many maximal ideals, then it has finitely many idempotents and so $\Spec(A)$ has finitely many connected components. Then using the Chinese Remainder Theorem and \cite[Lemma 4.7]{A. Tarizadeh Racsam2}, we observe that the group $\mathscr{B}(A)$ is isomorphic to the additive group $(\mathbb{Z}/2)^{d}$. Note that if $A$ has finitely many maximal ideals, then its Picard group is trivial. 
\end{proof}

Every Noetherian one dimensional ring satisfies in the hypothesis of the above result. In particular, if $A$ is a Dedekind domain then we have the canonical isomorphism of groups  $K_{0}(A)^{\ast}\simeq\Cl(A)\oplus\mathbb{Z}/2$.

\begin{corollary} Every Abelian group $G$ can be embedded in $K_{0}(A)^{\ast}$ for some Dedekind domain $A$ and $K_{0}(A)^{\ast}\simeq G\oplus\mathbb{Z}/2$.
\end{corollary}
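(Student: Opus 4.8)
The plan is to combine the structural description of $K_0(R)^{\ast}$ for Dedekind domains obtained above with a classical realization theorem for ideal class groups. Recall that the discussion following Corollary~\ref{coro lbp 24} already records that for any Dedekind domain $R$ one has the canonical isomorphism of groups $K_0(R)^{\ast}\simeq\Cl(R)\oplus\mathbb{Z}_2$: indeed $R$ is Noetherian of dimension one, hence has the line bundle property; it is an integral domain, hence has no nontrivial idempotents, so $\mathcal{B}(R)\simeq\mathbb{Z}_2$; and $\Pic(R)\simeq\Cl(R)$ because $T(R)$ is a field, so one may apply Corollary~\ref{Corollary Pic iso Cl} (or Theorem~\ref{Theorem Pic-II} together with the triviality of $\Pic(T(R))$). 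Consequently the whole statement reduces to the assertion that every abelian group $G$ is the ideal class group of some Dedekind domain.

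For this last point I would invoke Claborn's theorem, which asserts precisely that every abelian group arises as $\Cl(R)$ for a suitable Dedekind domain $R$. (For countable $G$ one may instead use the more elementary construction of Leedham-Green, or realize $R$ as an appropriate localization of a polynomial ring so as to prescribe the divisor classes; but the general case genuinely relies on Claborn's construction.) Fixing such an $R$ with $\Cl(R)\simeq G$ and substituting into the isomorphism of the previous paragraph gives $K_0(R)^{\ast}\simeq G\oplus\mathbb{Z}_2$, and then the canonical inclusion $G\hookrightarrow G\oplus\mathbb{Z}_2$ exhibits $G$ as a subgroup of $K_0(R)^{\ast}$, which is exactly the claim.

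The main, and essentially the only, obstacle is the existence half, namely Claborn's realization theorem; once it is granted, everything else is bookkeeping already performed in the paper, so the write-up should occupy only a few lines: a citation of Claborn's theorem followed by the displayed isomorphism $K_0(R)^{\ast}\simeq\Cl(R)\oplus\mathbb{Z}_2$ for Dedekind domains established after Corollary~\ref{coro lbp 24}.
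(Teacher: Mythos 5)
Your proposal is correct and follows essentially the same route as the paper: Claborn's realization theorem, the identification $\Cl(R)\simeq\Pic(R)$ via Corollary~\ref{Corollary Pic iso Cl}, and the line bundle property of Dedekind domains feeding into Corollary~\ref{coro lbp 24} to get $K_{0}(R)^{\ast}\simeq G\oplus\mathbb{Z}_{2}$. The only cosmetic difference is that the paper cites Theorem~\ref{Theorem Pic-iv} for the embedding $\Pic(R)\hookrightarrow K_{0}(R)^{\ast}$, whereas you read it off from the split isomorphism; these are the same argument.
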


\begin{proof} It is well known that every Abelian group $G$ is isomorphic to the ideal class group of a  Dedekind domain $A$  (see \cite[Theorem 7]{Claborn}). The ideal class group of every integral domain, and more generally of any reduced ring with finitely many minimal primes, is canonically isomorphic to its Picard group.  Every Dedekind domain has the line bundle property. Using these facts, then the assertion follows from Corollary \ref{coro lbp 24}.
\end{proof}

Let $A$ be a ring. It is well known that the canonical rank map $K_{0}(A)\rightarrow H_{0}(A)$ given by $[M,N]\mapsto\mathrm{r}_{M}-\mathrm{r}_{N}$ is a surjective morphism of rings and its kernel is precisely the nil-radical of $A$. Showing that the kernel of this map is contained in the nil-radical is the most difficult and technical part of the proof (for details see \cite[Chap. IX, \S3, Proposition 4.6]{Bass} or  \cite[Corollary 10.7]{Swan} or \cite[Chap. II, \S4, Corollary 4.6.1]{Weibel}). Then we have the canonical isomorphism of rings $K_{0}(A)_{\mathrm{red}}\simeq H_{0}(A)$. This fundamental identification leads us to the following results.

\begin{theorem}\label{Corollary idemps of gr} For any ring $A$, we have the canonical isomorphism of groups $\mathscr{B}(A)\simeq\mathscr{B}\big(K_{0}(A)\big)$.
\end{theorem}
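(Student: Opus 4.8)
The plan is to produce an explicit isomorphism $\mathcal{B}(R)\to\mathcal{B}\big(K_{0}(R)\big)$ of groups, where both sides carry the $\oplus$-operation $x\oplus y=x+y-2xy$ of their respective Boolean rings. The natural candidate is the map $\eta$ sending an idempotent $e\in R$ to $[Re,0]\in K_{0}(R)$. First I would check this lands in $\mathcal{B}\big(K_{0}(R)\big)$: as already observed in the proof of Theorem \ref{Theorem Pic-iv}, $Re\otimes_{R}Re\simeq Re$ because $Re\otimes_R Re \simeq Re\otimes_R R/R(1-e)\simeq Re/Re(1-e)\simeq Re$, so $[Re,0]\cdot[Re,0]=[Re,0]$. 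Thus $\eta$ is well-defined into the set of idempotents of $K_{0}(R)$.

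Next I would verify $\eta$ is a group morphism, i.e. $\eta(e\oplus e')=\eta(e)\oplus\eta(e')$ in $\mathcal{B}\big(K_{0}(R)\big)$, which unwinds to $[R(e+e'-2ee'),0]=[Re,0]+[Re',0]-2[Re,0]\cdot[Re',0]$. The right-hand side equals $[Re\oplus Re',0]-[Re\otimes_R Re'\oplus Re\otimes_R Re',0]$. Using $Re\otimes_R Re'\simeq Re/Re(1-e')$ (and symmetrically), together with the key formula of Corollary \ref{coro najib 22}, namely $Re\oplus Re'\simeq Re/Re(1-e')\oplus Re'/Re'(1-e)\oplus R(e+e'-2ee')$, the two "fractional" summands cancel against the subtracted term and one is left exactly with $[R(e+e'-2ee'),0]$. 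So $\eta$ is a homomorphism of groups.

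For injectivity I would compose with the canonical ring map $\phi:K_{0}(R)\to H_{0}(R)$, $[M,N]\mapsto \mathrm{r}_M-\mathrm{r}_N$ of Theorem \ref{theorem Gro K0(R)}. Since $\mathrm{r}_{Re}$ is the characteristic function of $D(e)$, the composite $\mathcal{B}(R)\to H_{0}(R)^{\ast}$ obtained by further passing to units (via $t\mapsto 1-2t$, i.e. $\mathrm{r}_{Re}\mapsto 1-2\mathrm{r}_{Re}$) is precisely the isomorphism $e\mapsto\phi_e$ of Lemma \ref{Lemma idempots iso gro}; hence $\eta$ is injective. For surjectivity, take any idempotent $\xi\in K_{0}(R)$. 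Then $\phi(\xi)$ is an idempotent of $H_{0}(R)$, and since $H_0(R)^{\ast}\simeq\mathcal{B}(R)$ there is an idempotent $e\in R$ with $\phi(\xi)=\mathrm{r}_{Re}=\phi(\eta(e))$. Thus $\xi-\eta(e)\in\Ker(\phi)$, which by Theorem \ref{theorem Gro K0(R)} is the nil-radical of $K_{0}(R)$. But the difference of two idempotents lying in the nil-radical (hence in the Jacobson radical) of a ring forces those idempotents to be equal — this is exactly the lifting-of-idempotents observation recalled just before the theorem statement. Therefore $\xi=\eta(e)$, proving surjectivity.

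The main obstacle is the homomorphism check: one must correctly match $[Re,0]\cdot[Re',0]$ with the class of $Re\otimes_R Re'\oplus Re\otimes_R Re'$ and see that Corollary \ref{coro najib 22} makes the "$Re/Re(1-e')$" and "$Re'/Re'(1-e)$" contributions disappear, leaving only $R(e+e'-2ee')$; this is precisely where the idempotent formula earns its keep. The injectivity and surjectivity arguments are then routine given Lemma \ref{Lemma idempots iso gro}, Theorem \ref{theorem Gro K0(R)}, and the elementary fact that idempotents congruent modulo the nil-radical coincide.
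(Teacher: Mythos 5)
Your proposal is correct and takes essentially the same route as the paper: the same map $e\mapsto[Re,0]$, the homomorphism property via Corollary \ref{coro najib 22} (which you helpfully spell out), and surjectivity by mapping an idempotent of $K_{0}(R)$ into $H_{0}(R)$, using that the kernel of $K_{0}(R)\rightarrow H_{0}(R)$ is the nil-radical and that idempotents congruent modulo the Jacobson radical coincide. The only cosmetic difference is injectivity, which you obtain by composing with $\phi$ and Lemma \ref{Lemma idempots iso gro}, whereas the paper reads off $\mathrm{r}_{Re}=\mathrm{r}_{Re'}$ directly from a stable isomorphism $Re\oplus R^{n}\simeq Re'\oplus R^{n}$; both reduce to the same fact that the rank map determines $D(e)$ and hence $e$.
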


\begin{proof} We show that the map $\mathscr{B}(A)\rightarrow\mathscr{B}\big(K_{0}(A)\big)$ given by $e\mapsto[Ae,0]$ is an isomorphism of groups. In the proof of Theorem \ref{Theorem Pic-iv}, we observed that $[Ae,0]$ is idempotent.
By Theorem \ref{coro najib 22}, this map is a group morphism. Suppose $[Ae,0]=[Ae',0]$ for some idempotents $e,e'\in A$. To prove $e=e'$ it suffices to show that $D(e)=D(e')$. There exists some $n\geqslant0$ such that $Ae\oplus A^{n}\simeq Ae'\oplus A^{n}$ as $A$-modules. Thus the rank maps of $Ae\oplus A^{n}$ and $Ae'\oplus A^{n}$ are the same, and so $\mathrm{r}_{Ae}=\mathrm{r}_{Ae'}$. Now if $\mathfrak{p}\in D(e)$ then $1=\mathrm{r}_{Ae}(\mathfrak{p})=
\mathrm{r}_{Ae'}(\mathfrak{p})$. This shows that $\mathfrak{p}\in D(e')$. Similarly, $D(e')\subseteq D(e)$.
Finally, we show that the above map is surjective. If $z\in K_{0}(A)$ is an idempotent then its image $g:=\phi(z)$ under the canonical ring map $\phi:K_{0}(A)\rightarrow H_{0}(A)$ is idempotent. It follows that $g(\mathfrak{p})\in\{0,1\}$ for all $\mathfrak{p}\in\Spec(A)$.
Since $g:\Spec(A)\rightarrow\mathbb{Z}$ is a continuous map, there exists an idempotent $e\in A$ such that $g^{-1}(\{1\})=D(e)$. This shows that
$\mathrm{r}_{Ae}=g$. Thus $[Ae,0]-z$ is contained in $\Ker(\phi)$. But $\Ker(\phi)$ is the nil-radical of $K_{0}(A)$. It can be easily seen that if $e$ and $e'$ are idempotents of a ring $A$ such that $e-e'$ is contained in the Jacobson radical of $A$, then $e=e'$. Therefore  $z=[Ae,0]$.
\end{proof}

We say that a morphism of rings $f:A\rightarrow B$ lifts idempotents if $e'\in B$ is an idempotent then there exists an idempotent $e\in A$ such that $f(e)=e'$. 

\begin{corollary}\label{coro 21 lift} A morphism of rings $f:A\rightarrow B$ lifts idempotents if and only if $K_{0}(f):K_{0}(A)\rightarrow K_{0}(B)$ lifts idempotents.
\end{corollary}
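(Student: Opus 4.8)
The plan is to promote the isomorphism $\theta_{R}\colon\mathcal{B}(R)\xrightarrow{\ \sim\ }\mathcal{B}\big(K_{0}(R)\big)$, $e\mapsto[Re,0]$, of Theorem \ref{Corollary idemps of gr} to a \emph{natural} isomorphism, and then to read off the statement from a diagram chase. Concretely, given a ring morphism $f\colon R\rightarrow R'$, I would first note that $\mathcal{B}(f)\colon\mathcal{B}(R)\rightarrow\mathcal{B}(R')$, $e\mapsto f(e)$, and $\mathcal{B}\big(K_{0}(f)\big)\colon\mathcal{B}\big(K_{0}(R)\big)\rightarrow\mathcal{B}\big(K_{0}(R')\big)$ are well defined, since a ring homomorphism carries idempotents to idempotents. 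Then I would verify that the square
$$\xymatrix{\mathcal{B}(R)\ar[r]^{\theta_{R}}\ar[d]_{\mathcal{B}(f)}&\mathcal{B}\big(K_{0}(R)\big)\ar[d]^{\mathcal{B}(K_{0}(f))}\\ \mathcal{B}(R')\ar[r]^{\theta_{R'}}&\mathcal{B}\big(K_{0}(R')\big)}$$
commutes. Chasing an idempotent $e\in R$, the upper-right route gives $K_{0}(f)\big([Re,0]\big)=[Re\otimes_{R}R',\,0]$ (using the formula $[M,N]\mapsto[M\otimes_{R}R',N\otimes_{R}R']$ for $K_{0}(f)$ recorded earlier), while the lower-left route gives $\theta_{R'}\big(f(e)\big)=[R'f(e),0]$; so commutativity amounts to the $R'$-module isomorphism $Re\otimes_{R}R'\simeq R'f(e)$.

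That isomorphism is the only computational point and is short: the map $re\mapsto r+R(1-e)$ identifies $Re$ with $R/R(1-e)$ as $R$-modules (it is well defined, and injective because $re\in R(1-e)$ forces $re=e\cdot re=0$), whence $Re\otimes_{R}R'\simeq R'/R'f(1-e)=R'/R'\big(1-f(e)\big)\simeq R'f(e)$; one then checks this is precisely the map through which $\theta$ and $K_{0}(f)$ become compatible.

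With naturality in hand the conclusion is formal. By Theorem \ref{Corollary idemps of gr} the horizontal maps $\theta_{R}$ and $\theta_{R'}$ are bijections, so the commuting square shows that $\mathcal{B}(f)$ is surjective if and only if $\mathcal{B}\big(K_{0}(f)\big)$ is surjective. By the definition of lifting idempotents, surjectivity of $\mathcal{B}(f)$ says exactly that $f$ lifts idempotents, and surjectivity of $\mathcal{B}\big(K_{0}(f)\big)$ says exactly that $K_{0}(f)$ lifts idempotents. This gives the equivalence.

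The main obstacle will be nothing deep — it is the bookkeeping in the naturality square, in particular confirming that the $R'$-module isomorphism $Re\otimes_{R}R'\simeq R'f(e)$ is the one actually induced by $\theta$ together with $K_{0}(f)$, with no identity-element or sign slip when passing through the Grothendieck ring. Given the explicit description of both $\theta_{R}$ (from the proof of Theorem \ref{Corollary idemps of gr}) and $K_{0}(f)$, this is routine, and once it is checked the proof is complete.
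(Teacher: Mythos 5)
Your proposal is correct and is essentially the paper's own argument, just packaged as a naturality square: the paper likewise combines the bijection $e\mapsto[Re,0]$ of Theorem \ref{Corollary idemps of gr} (surjectivity in one direction, injectivity in the other) with the computation $Re\otimes_{R}R'\simeq R/R(1-e)\otimes_{R}R'\simeq R'/R'(1-f(e))\simeq R'f(e)$. Your diagram-chase formulation is a clean reorganization of the same two ingredients, not a different route.
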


\begin{proof} Assume $f$ lifts idempotents. By Theorem \ref{Corollary idemps of gr}, each idempotent of $K_{0}(B)$ is of the form $[Be',0]$ where $e'\in B$ is  idempotent. So there exists an idempotent $e\in A$ such that $f(e)=e'$. This yields that $Ae\otimes_{A}B\simeq A/A(1-e)\otimes_{A}B\simeq B/B(1-e')\simeq Be'$. This shows that the image of the idempotent $[Ae,0]$ under the map $K_{0}(f)$ equals $[Be',0]$. Hence, $K_{0}(f)$ lifts idempotents. Conversely, if $e'\in B$ is an idempotent then by hypothesis, there exists an idempotent $e\in A$ such that $[Bf(e),0]=[Be',0]$. But in the proof of Theorem \ref{Corollary idemps of gr} we observe that in this case, $f(e)=e'$. Hence, $f$ lifts idempotents.
\end{proof}

\begin{theorem}\label{coro 23 surj} If a morphism of rings $f:A\rightarrow B$ lifts idempotents and $B$ has finitely many maximal ideals, then $K_{0}(f):K_{0}(A)\rightarrow K_{0}(B)$ is surjective.
\end{theorem}

\begin{proof} Take $z'\in K_{0}(B)$. Then consider the following diagram of commutative rings: $$\xymatrix{
K_{0}(A)\ar[r]^{K_{0}(f)}\ar[d]^{\phi} &K_{0}(B)\ar[d]^{\psi}\\H_{0}(A)
\ar[r]^{H_{0}(f)}&H_{0}(B)}$$ where the vertical arrows are the canonical rank maps. Since $f$ lifts idempotents, then by \cite[Theorem 5.2]{A. Tarizadeh Racsam2}, $H_{0}(f)$ is surjective. The canonical map $\phi$ is also surjective. Thus there exists some $z\in K_{0}(A)$ such that $\psi(z')=H_{0}(f)\phi(z)$.  We know that if $M$ is a finitely generated  projective (resp. flat) $A$-module, then $M\otimes_{A}B$ is a finitely generated  projective (resp. flat) $B$-module and we have $\mathrm{r}_{M\otimes_{A}B}=\mathrm{r}_{M}\circ f^{\ast}$. This shows that the above diagram is commutative. It follows that
$z'-z''\in\Ker(\psi)$ where $z''$ is the image of $z$ under $K_{0}(f)$. We know that $\Ker(\psi)$ is the nil-radical of $K_{0}(B)$. But the nil-radical of $K_{0}(B)$ is precisely the set of all elements of the form $[N,B^{d}]$ where $N$ is a finitely generated projective $B$-module of rank $d\geqslant0$. It is well known that every finitely generated projective (even flat) module of constant rank over a ring with finitely many maximal ideals is a free module (see \cite[Tags 00NX, 00NZ, 02M9]{Johan}). Therefore $N\simeq B^{d}$ as $B$-modules and so $[N,B^{d}]=0$. This shows that  $K_{0}(B)$ is a reduced ring. Thus $z'=z''=K_{0}(f)(z)$. Hence, $K_{0}(f)$ is surjective.
\end{proof}

Let $A$ be a local ring or a $\mathrm{PID}$. It is well known that every projective module over $A$ is a free $A$-module. In particular, the semiring of the isomorphism classes of finitely generated projective $A$-modules is isomorphic to the semiring of the natural numbers $\mathbb{N}$. Hence, the ring $K_{0}(A)$ is isomorphic to the ring of integers $\mathbb{Z}=K_{0}(\mathbb{N})$. 

Using the above observation, if $\mathfrak{p}$ is a prime ideal of a ring $A$ then the canonical ring map $\pi:A\rightarrow A_{\mathfrak{p}}$ induces a  ring map $K_{0}(\pi):K_{0}(A)\rightarrow K_{0}(A_{\mathfrak{p}})=\mathbb{Z}$. The ring map $K_{0}(\pi)$ is surjective, because its composition with the natural ring map $\mathbb{Z}\rightarrow K_{0}(A)$ is the identity map of $\mathbb{Z}$. In particular, if $A$ is a nonzero ring, then the ring $K_{0}(A)$ is infinite.

Similarly, if $\mathfrak{m}$ is a maximal ideal of $A$ then the canonical ring map $\pi:A\rightarrow A/{\mathfrak{m}}$ induces a surjective ring map $K_{0}(\pi):K_{0}(A)\rightarrow K_{0}(A/{\mathfrak{m}})=\mathbb{Z}$.

\begin{corollary}\label{coro 22 cz} If $A$ is a nonzero ring, then $K_{0}(A)$ and $H_{0}(A)$ are infinite rings of characteristic zero.
\end{corollary}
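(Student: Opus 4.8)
The plan is to leverage the surjective ring maps $K_{0}(R)\to K_{0}(R/\mathfrak{m})=\mathbb{Z}$ recorded in the paragraph just preceding the statement, together with the identification $K_{0}(R)_{\mathrm{red}}\simeq H_{0}(R)$ of Theorem \ref{theorem Gro K0(R)}. Since $R\neq 0$, it has a maximal ideal $\mathfrak{m}$; as $R/\mathfrak{m}$ is a field, every finitely generated projective $R/\mathfrak{m}$-module is free, so $K_{0}(R/\mathfrak{m})\simeq\mathbb{Z}$, and the canonical ring map $K_{0}(\pi)\colon K_{0}(R)\to K_{0}(R/\mathfrak{m})\simeq\mathbb{Z}$ induced by $\pi\colon R\to R/\mathfrak{m}$ is surjective. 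In particular $K_{0}(R)$ admits a surjective ring homomorphism onto $\mathbb{Z}$, hence is infinite, since the fibres of this map over distinct integers are non-empty and pairwise disjoint.

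Next I would settle the characteristic. The composite $\mathbb{Z}\to K_{0}(R)\xrightarrow{K_{0}(\pi)}\mathbb{Z}$ of the canonical map $\mathbb{Z}\to K_{0}(R)$ with $K_{0}(\pi)$ is a ring endomorphism of $\mathbb{Z}$, hence the identity; therefore $\mathbb{Z}\to K_{0}(R)$ is injective, i.e. $K_{0}(R)$ has characteristic zero.

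Finally, for $H_{0}(R)$ there are two equally short routes. Either one observes that, since $\Spec(R)\neq\emptyset$, the constant functions form a subring of $H_{0}(R)$ isomorphic to $\mathbb{Z}$, so $H_{0}(R)$ is infinite of characteristic zero; or one uses $H_{0}(R)\simeq K_{0}(R)_{\mathrm{red}}$ from Theorem \ref{theorem Gro K0(R)}, noting that passing to the reduced ring kills no nonzero integer multiple of $1$ (if $n\cdot 1$ were nilpotent then $n^{k}=0$ for some $k$, forcing $n=0$ in characteristic zero) while the embedded copy of $\mathbb{Z}$ survives.

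I do not expect any genuine obstacle here: the whole argument is an immediate assembly of facts already established. The only point requiring a little care is to invoke the right inputs — that $K_{0}$ of a field equals $\mathbb{Z}$, that the map $K_{0}(R)\to K_{0}(R/\mathfrak{m})$ is onto, and that $\mathrm{End}_{\mathrm{Ring}}(\mathbb{Z})=\{\mathrm{Id}\}$ — rather than to prove anything new.
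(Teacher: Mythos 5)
Your proof is correct, and in two places it is lighter than the paper's own argument. Both you and the paper obtain infinitude of $K_{0}(R)$ from the surjection $K_{0}(R)\rightarrow K_{0}(R/\mathfrak{m})\simeq\mathbb{Z}$ recorded in the paragraph preceding the statement. For the characteristic of $K_{0}(R)$, however, the paper argues indirectly: it first shows $H_{0}(R)$ has characteristic zero via the constant functions, and then transports this to $K_{0}(R)$ through the ``well known'' injective ring map $H_{0}(R)\rightarrow K_{0}(R)$, $f\mapsto\sum_{n}n[Re_{n},0]$, which it does not prove. Your observation that the composite $\mathbb{Z}\rightarrow K_{0}(R)\rightarrow K_{0}(R/\mathfrak{m})\simeq\mathbb{Z}$ is a ring endomorphism of $\mathbb{Z}$, hence the identity, so that $\mathbb{Z}\rightarrow K_{0}(R)$ is injective, bypasses that unproved input entirely (indeed any ring admitting a surjection onto $\mathbb{Z}$ has characteristic zero). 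Likewise, your first route for $H_{0}(R)$ --- the constant functions embed $\mathbb{Z}$ because $\Spec(R)\neq\emptyset$ --- yields infinitude and characteristic zero simultaneously, whereas the paper uses the constants only for the characteristic and gets infinitude of $H_{0}(R)$ by passing through $H_{0}(R)\simeq K_{0}(R)_{\mathrm{red}}$ and the surjection onto $K_{0}(R)/P\simeq\mathbb{Z}$ from Theorem \ref{theorem Gro K0(R)}. So your argument avoids Theorem \ref{theorem Gro K0(R)} altogether at no cost; your alternative second route for $H_{0}(R)$ through $K_{0}(R)_{\mathrm{red}}$ is also valid but unnecessary given the first.
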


\begin{proof} In the above argument we observed that $K_{0}(A)$ is an extension ring of $\mathbb{Z}$. Hence, $K_{0}(A)$ is of characteristic zero. In addition, there is a prime ideal $P$ in $K_{0}(A)$ such that the ring $K_{0}(A)/P$ is isomorphic to $\mathbb{Z}$. Then we obtain a surjective (ring) map $H_{0}(A)\simeq K_{0}(A)_{\mathrm{red}}\rightarrow K_{0}(A)/P\simeq\mathbb{Z}$. Thus $H_{0}(A)$ is also infinite. In fact, $H_{0}(A)$ is an extension ring of $\mathbb{Z}$, and so it is of characteristic zero.
\end{proof}

The following result is indeed a reformulation of a well-known fact in the literature:  

\begin{theorem} If $I$ is an ideal of a ring $A$ such that $A$ is complete with respect to the $I$-adic topology, then we have the following exact sequence of Abelian groups: $$\xymatrix{0\ar[r]&1+I\ar[r]& A^{\ast}\ar[r]&(A/I)^{\ast}
\ar[r]&0.}$$
\end{theorem}

\begin{proof} By \cite[Lemma 2.1]{Tarizadeh new}, it will be enough to show that $I$ is contained in the Jacobson radical of $A$. To see this, it suffices to show that $1-a$ is invertible in $A$ for all $a\in I$. Clearly the sequence $(\sum\limits_{k=0}^{n-1}a^{k}+I^{n})_{n\geqslant1}=
(1+I,1+a+I^{2},1+a+a^{2}+I^{3},\cdots)$ is an element of the ring $\lim\limits_{\overleftarrow{n\geqslant1}}A/I^{n}$.
By hypothesis, the canonical ring map $\pi:A\rightarrow\lim\limits_{\overleftarrow{n\geqslant1}}
A/I^{n}$ given by $r\mapsto(r+I^{n})_{n\geqslant1}$ is surjective. So there exists some $b\in A$ such that $\sum\limits_{k=0}^{n-1}a^{k}-b\in I^{n}$ for all $n\geqslant1$. Then we show that $(1-a)b=1$. To see this, it will be enough to show that $(1-a)b-1\in\Ker(\pi)=0$. But we have $b=\sum\limits_{k=0}^{n-1}a^{k}-c_{n}$ for some $c_{n}\in I^{n}$ and so
$(1-a)b-1=(a-1)c_{n}-a^{n}\in I^{n}$ for all $n\geqslant1$. This completes the proof. 
\end{proof}

For any ring $A$, the subgroups $1+\mathfrak{N}$ and $1+\mathfrak{J}$ of $A^{\ast}$ are respectively called the group of unipotents of $A$ and the group of uni-Jacobsons of $A$ where $\mathfrak{N}$ and $\mathfrak{J}$ are the nil-radical and Jacobson radical of $A$.

In the following result, we characterize finitely generated projective modules in terms of orthogonal idempotents:   

\begin{theorem}\label{Theorem new charct proj mod} Let $M$ be a finitely generated module over a ring $A$. Then $M$ is a projective $A$-module if and only if there exists a finite sequence $e_{0},\ldots,e_{n}$ of orthogonal idempotents of $A$ such that $\sum\limits_{k=0}^{n}e_{k}=1$ and $M_{\mathfrak{p}}\simeq (A_{\mathfrak{p}})^{{k}}$ as $A_{\mathfrak{p}}$-modules
for all $\mathfrak{p}\in D(e_{k})$. In this case, the annihilator of the $A$-module $\Lambda^{k}(M)$ is generated by the idempotent $\sum\limits_{i=0}^{k-1}e_{i}$ for all $k\in\{1,\ldots,n, n+1\}$.
\end{theorem}

\begin{proof} If $M$ is $A$-projective, then its rank map $\mathrm{r}_{M}:\Spec(A)\rightarrow\mathbb{Z}$ is continuous. Using the quasi-compactness of the prime spectrum, there exists a natural number $n\geqslant0$ such that $\Spec(A)=\bigcup\limits_{k=0}^{n}\mathrm{r}_{M}^{-1}
(\{k\})$. Clearly each $\mathrm{r}_{M}^{-1}(\{k\})$ is a clopen (both open and closed) subset of $\Spec(A)$. Thus there exists an idempotent $e_{k}\in A$ such that $\mathrm{r}_{M}^{-1}(\{k\})=D(e_{k})$. Now the desired assertion is easily deduced. Next, we prove the reverse implication. By the hypothesis, $M$ is a flat $A$-module, because flatness is a local property. Again by the hypothesis, the rank map of $M$ is continuous. Hence, $M$ is $A$-projective. Now we show that the annihilator of $\Lambda^{k}(M)$ is generated by $\sum\limits_{i=0}^{k-1}e_{i}$. Indeed, we have the canonical isomorphism of $A_{\mathfrak{p}}$-modules $\Lambda_{A}^{k}(M)\otimes_{A}A_{\mathfrak{p}}\simeq
\Lambda_{A_{\mathfrak{p}}}^{k}(M_{\mathfrak{p}})$. Also remember that if $F$ is a free $A$-module of rank $d\geqslant0$, then $\Lambda^{k}(F)$ is a free $A$-module of rank $\binom{d}{k}$ and hence
$\Lambda^{k}(F)\neq0$ for all $0\leqslant k\leqslant d$ and $\Lambda^{k}(F)=0$ for all $k>d$. Therefore $\Supp\big(\Lambda^{k}(M)\big)=\bigcup\limits_{i=k}^{n}D(e_{i})
=D(\sum\limits_{i=k}^{n}e_{i})$. Since $\Lambda^{k}(M)$ is a finitely generated projective $A$-module, so its annihilator is generated by an idempotent element $e\in A$, because the annihilator of every finitely generated projective module is generated by an idempotent element (see e.g. \cite[Corollary 3.2]{A. Tarizadeh acta}). Thus $\Supp\big(\Lambda_{A}^{k}(M)\big)=V(e)=D(1-e)$.
It follows that $1-e=\sum\limits_{i=k}^{n}e_{i}$, hence  $e=\sum\limits_{i=0}^{k-1}e_{i}$. 
\end{proof}

\begin{example} We illustrate Theorem \ref{Theorem new charct proj mod} with two examples. If $A$ is a ring then for $M=A^{2}$ we have the sequence $e_{0}=e_{1}=0$ and $e_{2}=1$. As another example, if $e\in A$ is an idempotent then for the projective $A$-module $M=Ae$ we have the sequence $e_{0}=1-e$ and $e_{1}=e$.
\end{example}



\begin{thebibliography}{10}
\bibitem{Bass}
H. Bass, Algebraic K-Theory, W.A. Benjamin, Inc. (1968).
\bibitem{Claborn}
L. Claborn, Every abelian group is a class group. Pacific J. Math., 18 (1966) 219-222.
\bibitem{Johan}
A.J. de Jong et al., The Stacks Project, see http://stacks.math.columbia.edu., (2025).
\bibitem{Ischebeck-Rao}
F. Ischebeck and R.A. Rao, Ideals and Reality:
Projective Modules and Number of Generators of Ideals, Springer-Verlag, (2005).
\bibitem{Lam}
T.Y. Lam, A First Course in Noncommutative Rings, Springer-Verlag, (2001).
\bibitem{Lang}
S. Lang, Algebra, rev. 3rd ed., Springer-Verlag, (2002).
\bibitem{Matsumura}
H. Matsumura, Commutative Ring Theory, Cambridge University Press,
Cambridge, (1989).
\bibitem{Silvester}
J.R. Silvester, Introduction to Algebraic K-Theory, Chapman and Hall, (1981). 
\bibitem{Swan}
R.G. Swan, Algebraic K-Theory, Springer (1968).
\bibitem{A. Tarizadeh Racsam2}
A. Tarizadeh and P.K. Sharma, Structural results on lifting, orthogonality and finiteness of idempotents, Rev. R. Acad. Cienc. Exactas F\'{\i}s. Nat. Ser. A Mat. (RACSAM), 116(1), 54 (2022).
\bibitem{A. Tarizadeh acta}
A. Tarizadeh, Some results on pure ideals and trace ideals of projective modules, Acta Math. Vietnam. 47
(2022) 475-481.
\bibitem{Tarizadeh-Taheri}
A. Tarizadeh and Z. Taheri, Stone type representations and dualities by power set ring, J. Pure Appl. Algebra 225(11) (2021) 106737.
\bibitem{Tarizadeh new}
A. Tarizadeh, Splitting in a complete local ring and  decomposition its group of units, arXiv:2508.08753, (2025).
\bibitem{Weibel}
C.A. Weibel, The K-Book: An Introduction to Algebraic K-theory, American Mathematical Society, Providence, Rhode Island (2013). 
\end{thebibliography}
\end{document}